\documentclass[10pt]{article}


\usepackage{amsmath,amsbsy,amssymb,amsthm}
\usepackage{a4wide}
\usepackage{graphicx,psfrag}
\usepackage{subfigure}


\newtheorem{Theorem}{Theorem}
\newtheorem{Definition}[Theorem]{Definition}
\newtheorem{Remark}[Theorem]{Remark}
\newtheorem{Lemma}[Theorem]{Lemma}


\def\a{{\alpha(t)}}
\def\ak{{\alpha_k(t_k)}}
\def\Dak{{\alpha'_k(t_k)}}
\def\t{\tau}
\def\LCI{{^C_aD_t^{\a}}}
\def\RCI{{^C_tD_b^{\a}}}
\def\LCII{{^C_a\mathcal{D}_t^{\a}}}
\def\RCII{{^C_t\mathcal{D}_b^{\a}}}
\def\LCIII{{^C_a\mathbb{D}_t^{\a}}}
\def\RCIII{{^C_t\mathbb{D}_b^{\a}}}
\def\PLCI{{^C_{a_k}D_{t_k}^{\ak}}}
\def\PRCI{{^C_{t_k}D_{b_k}^{\ak}}}
\def\PLCII{{^C_{a_k}\mathcal{D}_{t_k}^{\ak}}}
\def\PRCII{{^C_{t_k}\mathcal{D}_{b_k}^{\ak}}}
\def\PLCIII{{^C_{a_k}\mathbb{D}_{t_k}^{\ak}}}
\def\PRCIII{{^C_{t_k}\mathbb{D}_{b_k}^{\ak}}}
\def\C{\left(^{n-\ak}_{\quad p}\right)}
\def\D{\left(^{1-\ak}_{\quad p}\right)}
\def\DS{\displaystyle}


\begin{document}

\title{Caputo derivatives of fractional variable order:\\
numerical approximations\thanks{Part of first
author's Ph.D., which is carried out at the University of Aveiro
under the Doctoral Programme \emph{Mathematics and Applications}
of Universities of Aveiro and Minho.\newline
This is a preprint of a paper whose final and definite form is in
\emph{Communications in Nonlinear Science and Numerical Simulation},
ISSN: 1007-5704. Paper submitted 27/May/2015; revised 06/Oct/2015; 
accepted 30/Oct/2015.}}

\author{Dina Tavares$^{a,b}$\\
\texttt{dtavares@ipleiria.pt}
\and
Ricardo Almeida$^b$\\
\texttt{ricardo.almeida@ua.pt}
\and Delfim F. M. Torres$^b$\\
\texttt{delfim@ua.pt}}

\date{$^{a}${\em{ESECS, Polytechnic Institute of Leiria, 2410--272 Leiria, Portugal}}\\[0.3cm]
$^{b}${\em{\text{Center for Research and Development in Mathematics and Applications (CIDMA)},
Department of Mathematics, University of Aveiro, 3810--193 Aveiro, Portugal}}}

\maketitle


\begin{abstract}
We present a new numerical tool to solve partial differential equations
involving Caputo derivatives of fractional variable order.
Three Caputo-type fractional operators are considered,
and for each one of them an approximation formula is obtained in terms of standard
(integer-order) derivatives only. Estimations for the error of the approximations are also provided.
We then compare the numerical approximation of some test function with its exact fractional derivative.
We end with an exemplification of how the presented methods can be used
to solve partial fractional differential equations of variable order.

\bigskip

\noindent \textbf{Keywords}: fractional calculus, fractional variable order,
fractional differential equations, approximation methods.

\medskip

\noindent \textbf{MSC 2010}: 33F05, 35R11.

\smallskip

\noindent \textbf{PACS 2010}: 02.30.Gp, 02.60.Lj.
\end{abstract}


\section{Introduction}

As is well known, several physical phenomena are often better described
by fractional derivatives \cite{MR3108996,MR2922940,MR3089091}.
This is mainly due to the fact that fractional operators
take into consideration the evolution of the system, by taking the global
correlation, and not only local characteristics.
Moreover, integer-order calculus sometimes
contradict the experimental results
and therefore derivatives of fractional
order may be more suitable \cite{MR1890104}.

An interesting recent generalization of the theory of fractional calculus consists
to allow the fractional order of the derivative
to be non-constant, depending on time \cite{MR3197190,Od0,Od1}.
With this approach, the non-local properties are more evident and numerous
applications have been found in physics, control and signal processing
\cite{Coimbra2,Ingman,Ostalczyk,Od2,Ramirez2,Rapaic,Valerio}.
One difficult issue, that usually arises when dealing with such fractional operators,
is the extreme difficulty in solving analytically such problems \cite{MR3047859,Zhuang}.
Thus, in most cases, we do not know the exact solution
for the problem and one needs to seek a numerical approximation.
Several numerical methods can be found in the literature,
typically applying some discretization over time or replacing the fractional
operators by a proper decomposition \cite{MR3047859,Zhuang}.

Recently, new approximation formulas were given for fractional constant order operators,
with the advantage that higher-order derivatives are not required
to obtain a good accuracy of the method \cite{Atanackovic1,Pooseh0,Pooseh1}.
These decompositions only depend on integer-order derivatives,
and by replacing the fractional operators that appear in the problem by them,
one leaves the fractional context ending up in the presence of a standard problem,
where numerous tools are available to solve them. Here we extend such decompositions
to Caputo fractional problems of variable order.

The paper is organized as follows. To start, in Section~\ref{sec:FC} we formulate
the needed definitions. Namely, we present three types of Caputo derivatives
of variable fractional order. First, we consider one independent variable only (Section~\ref{sec:2.1});
then we generalize for several independent variables (Section~\ref{sec:2.2}). Section~\ref{sec:theorems}
is the main core of the paper: we prove approximation formulas for the given fractional operators
and upper bound formulas for the errors. To test the efficiency of the proposed method,
in Section~\ref{sec:EX} we compare the exact fractional derivative of some test function
with the numerical approximations obtained from the decomposition formulas given in Section~\ref{sec:theorems}.
To end, in Section~\ref{sec:app} we apply our method to approximate two physical problems
involving Caputo fractional operators of variable order (a time-fractional diffusion equation
in Section~\ref{example1} and a fractional Burgers' partial differential equation in fluid mechanics
in Section~\ref{sec:fluid:mech}) by classical problems that may be solved by well-known standard techniques.


\section{Fractional calculus of variable order}
\label{sec:FC}

In the literature of fractional calculus, several different
definitions  of derivatives are found \cite{MR1347689}.
One of those, introduced by Caputo in 1967 \cite{Caputo} and studied independently by other authors,
like D\v zrba\v sjan and Nersesjan in 1968 \cite{Dzherbashyan} and Rabotnov in 1969 \cite{Rabotnov},
has found many applications and seems to be more suitable
to model physic phenomena \cite{Coimbra,Dalir,Diethelm,Machado,Murio,Singh,Sweilam,Yajima}.
Before generalizing the Caputo derivative for a variable order of differentiation,
we recall two types of special functions: the Gamma and Psi functions.
The Gamma function is an extension of the factorial function to real numbers, and is defined by
$$
\Gamma(t)=\int_0^\infty \t^{t-1}\exp(-\t)\,d\t, \quad t>0.
$$
We mention that other definitions exist for the Gamma function, and it is possible
to define it for complex numbers, except the non-positive integers.
A basic but fundamental property that we will use later is the following:
$$
\Gamma(t+1)=t\, \Gamma(t).
$$
The Psi function is the derivative of the logarithm of the Gamma function:
$$
\Psi(t)=\frac{d}{dt}\ln\left(\Gamma(t)\right)=\frac{\Gamma'(t)}{\Gamma(t)}.
$$
Given $\alpha\in(0,1)$, the left and right Caputo fractional derivatives
of order $\alpha$ of a function $x:[a,b]\to\mathbb{R}$ are defined by
$$
{_a^CD_t^\alpha}x(t)={_aD_t^\alpha}(x(t)-x(a))
$$
and
$$
{_t^CD_b^\alpha}x(t)={_tD_b^\alpha}(x(t)-x(b)),
$$
respectively, where ${_aD_t^\alpha}x(t)$ and ${_tD_b^\alpha}x(t)$ denote
the left and right Riemann--Liouville fractional derivative of order $\alpha$, that is,
$$
{_aD_t^\alpha}x(t)=\frac{1}{\Gamma(1-\alpha)}\frac{d}{dt}\int_a^t(t-\t)^{-\alpha}x(\t)d\t
$$
and
$$
{_tD_b^\alpha}x(t)=\frac{-1}{\Gamma(1-\alpha)}\frac{d}{dt}\int_t^b(\t-t)^{-\alpha}x(\t)d\t.
$$
If $x$ is differentiable, then, integrating by parts,
one can prove the following equivalent definitions:
$$
{_a^CD_t^\alpha}x(t)=\frac{1}{\Gamma(1-\alpha)}\int_a^t(t-\t)^{-\alpha}x'(\t)d\t
$$
and
$$
{_t^CD_b^\alpha}x(t)=\frac{-1}{\Gamma(1-\alpha)}\int_t^b(\t-t)^{-\alpha}x'(\t)d\t.
$$
From these definitions, it is clear that the Caputo fractional derivative of a constant is zero,
which is false when we consider the Riemann--Liouville fractional derivative. Also,
the boundary conditions that appear in the Laplace transform of the Caputo derivative
depend on integer-order derivatives, and so coincide with the classical case.


\subsection{Variable order Caputo derivatives for functions of one variable}
\label{sec:2.1}

Our goal is to consider fractional derivatives of variable order, with $\alpha$ depending on time.
In fact, some phenomena in physics are better described
when the order of the fractional operator is not constant,
for example, in the diffusion process in an inhomogeneous or heterogeneous medium, or processes
where the changes in the environment modify the dynamic of the particle \cite{Chechkin,Santamaria,Sun}.
Motivated by the above considerations, we introduce three types of Caputo fractional derivatives.
The order of the derivative is considered as a function $\a$ taking values on the open interval $(0,1)$.
To start, we define two different kinds of Riemann--Liouville fractional derivatives.

\begin{Definition}[Riemann--Liouville fractional derivatives of order $\a$---types I and II]
Given a function  $x:[a,b]\to\mathbb{R}$,
\begin{enumerate}
\item the type I left Riemann--Liouville fractional derivative of order $\a$ is defined by
$$
{_aD_t^\a}x(t)=\frac{1}{\Gamma(1-\a)}\frac{d}{dt}\int_a^t(t-\t)^{-\a}x(\t)d\t;
$$
\item the type I right Riemann--Liouville fractional derivative of order $\a$ is defined by
$$
{_tD_b^\a}x(t)=\frac{-1}{\Gamma(1-\a)}\frac{d}{dt}\int_t^b(\t-t)^{-\a}x(\t)d\t;
$$
\item the type II left Riemann--Liouville fractional derivative of order $\a$ is defined by
$$
{_a\mathcal{D}_t^\a}x(t)=\frac{d}{dt}\left(\frac{1}{\Gamma(1-\a)}\int_a^t(t-\t)^{-\a}x(\t)d\t\right);
$$
\item the type II right Riemann--Liouville fractional derivative of order $\a$ is defined by
$$
{_t\mathcal{D}_b^\a}x(t)=\frac{d}{dt}\left(\frac{-1}{\Gamma(1-\a)}\int_t^b(\t-t)^{-\a}x(\t)d\t\right).
$$
\end{enumerate}
\end{Definition}

The Caputo derivatives are given using the previous Riemann--Liouville fractional derivatives.

\begin{Definition}[Caputo fractional derivatives of order $\a$---types I, II and III]
Given a function  $x:[a,b]\to\mathbb{R}$,
\begin{enumerate}
\item the type I left Caputo derivative of order $\a$ is defined by
$$
\LCI x(t)={_aD_t^\a}(x(t)-x(a))
=\frac{1}{\Gamma(1-\a)}\frac{d}{dt}\int_a^t(t-\t)^{-\a}[x(\t)-x(a)]d\t;
$$
\item the type I right Caputo derivative of order $\a$ is defined by
$$
\RCI x(t)={_tD_b^\a}(x(t)-x(b))
=\frac{-1}{\Gamma(1-\a)}\frac{d}{dt}\int_t^b(\t-t)^{-\a}[x(\t)-x(b)]d\t;
$$
\item the type II left Caputo derivative of order $\a$ is defined by
$$
\LCII x(t)= {_a\mathcal{D}_t^\a} (x(t)-x(a))
=\frac{d}{dt}\left(\frac{1}{\Gamma(1-\a)}\int_a^t(t-\t)^{-\a}[x(\t)-x(a)]d\t\right);
$$
\item the type II right Caputo derivative of order $\a$ is defined by
$$
\RCII x(t)= {_t\mathcal{D}_b^\a}(x(t)-x(b))
=\frac{d}{dt}\left(\frac{-1}{\Gamma(1-\a)}\int_t^b(\t-t)^{-\a}[x(\t)-x(b)]d\t\right);
$$
\item the type III left Caputo derivative of order $\a$ is defined by
$$
\LCIII x(t)=\frac{1}{\Gamma(1-\a)}\int_a^t(t-\t)^{-\a}x'(\t)d\t;
$$
\item the type III right Caputo derivative of order $\a$ is defined by
$$
\RCIII x(t)=\frac{-1}{\Gamma(1-\a)}\int_t^b(\t-t)^{-\a}x'(\t)d\t.
$$
\end{enumerate}
\end{Definition}

In contrast with the case when $\alpha$ is a constant,
definitions of different types do not coincide.

\begin{Theorem}
The following relations between the left fractional operators hold:
\begin{equation}
\label{eq1}
\LCI x(t)=\LCIII x(t)+\frac{\alpha'(t)}{\Gamma(2-\a)}
\int_a^t(t-\t)^{1-\a}x'(\t)\left[\frac{1}{1-\a}-\ln(t-\t)\right]d\t
\end{equation}
and
\begin{equation}
\label{eq2}
\LCI x(t)=\LCII x(t)-\frac{\alpha'(t)\Psi(1-\a)}{\Gamma(1-\a)}
\int_a^t(t-\t)^{-\a}[x(\t)-x(a)]d\t.
\end{equation}
\end{Theorem}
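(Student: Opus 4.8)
The plan is to prove both identities by differentiating under the integral sign, exploiting the fact that the only $t$-dependence that distinguishes the three types sits either inside the kernel $(t-\t)^{-\a}$ (whose exponent now depends on $t$ through $\a$) or in the position of $\frac{d}{dt}$ relative to the factor $\frac{1}{\Gamma(1-\a)}$. For \eqref{eq1}, I would start from the integration-by-parts form. In $\LCI x(t)=\frac{1}{\Gamma(1-\a)}\frac{d}{dt}\int_a^t(t-\t)^{-\a}[x(\t)-x(a)]\,d\t$, integrate by parts once to move the derivative onto $x$, obtaining an integrand proportional to $(t-\t)^{1-\a}x'(\t)$ (the boundary term at $\t=t$ vanishes since $1-\a>0$, and at $\t=a$ it vanishes because $x(\t)-x(a)=0$ there). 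This produces $\frac{1}{\Gamma(2-\a)}\frac{d}{dt}\int_a^t(t-\t)^{1-\a}x'(\t)\,d\t$, where now both the exponent $1-\a$ and the prefactor $\Gamma(2-\a)$ depend on $t$.

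Next I would carry out the outer differentiation carefully using the product and Leibniz rules. Differentiating $\frac{1}{\Gamma(2-\a)}$ in $t$ brings down a factor involving $\Psi(2-\a)\alpha'(t)$; differentiating the integral $\int_a^t(t-\t)^{1-\a}x'(\t)\,d\t$ gives the boundary contribution at $\t=t$ (which vanishes) plus $\int_a^t \frac{\partial}{\partial t}\left[(t-\t)^{1-\a}\right]x'(\t)\,d\t$. The key computation is $\frac{\partial}{\partial t}(t-\t)^{1-\a(t)} = (1-\a)(t-\t)^{-\a} - \alpha'(t)(t-\t)^{1-\a}\ln(t-\t)$, where the first term, after dividing by $\Gamma(2-\a)=(1-\a)\Gamma(1-\a)$, reconstructs exactly $\LCIII x(t)$, and the remaining pieces — the $\ln(t-\t)$ term and the $\Psi(2-\a)$ term from differentiating the Gamma prefactor — must be combined. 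Using $\Gamma(2-\a)=(1-\a)\Gamma(1-\a)$ one checks the $\Psi$-term reorganizes into the $\frac{1}{1-\a}$ inside the bracket of \eqref{eq1}; this bookkeeping is the main obstacle, since one must track the $t$-derivative of $1/\Gamma(2-\a)$ against the explicit $(1-\a)^{-1}$ and confirm the coefficients collapse to $\frac{\alpha'(t)}{\Gamma(2-\a)}\left[\frac{1}{1-\a}-\ln(t-\t)\right]$.

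For \eqref{eq2}, the argument is shorter: the difference $\LCI x(t) - \LCII x(t)$ is precisely $\frac{1}{\Gamma(1-\a)}\frac{d}{dt}(\cdot) - \frac{d}{dt}\left(\frac{1}{\Gamma(1-\a)}(\cdot)\right)$ applied to the same integral $I(t):=\int_a^t(t-\t)^{-\a}[x(\t)-x(a)]\,d\t$. Writing the second expression with the product rule, $\frac{d}{dt}\left(\frac{1}{\Gamma(1-\a)}I(t)\right) = \frac{1}{\Gamma(1-\a)}I'(t) + \frac{d}{dt}\left(\frac{1}{\Gamma(1-\a)}\right)I(t)$, the first terms cancel and one is left with $-\frac{d}{dt}\left(\frac{1}{\Gamma(1-\a)}\right)I(t)$. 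Since $\frac{d}{dt}\left(\frac{1}{\Gamma(1-\a)}\right) = -\frac{\Gamma'(1-\a)}{\Gamma(1-\a)^2}\cdot(-\alpha'(t)) = \frac{\alpha'(t)\Psi(1-\a)}{\Gamma(1-\a)}$ (using the chain rule and $\Psi = \Gamma'/\Gamma$), we get exactly the claimed term $-\frac{\alpha'(t)\Psi(1-\a)}{\Gamma(1-\a)}\int_a^t(t-\t)^{-\a}[x(\t)-x(a)]\,d\t$. I would present \eqref{eq2} first as a warm-up, then \eqref{eq1}; throughout, I would assume $x$ is (at least) $C^1$ and $\a$ differentiable so that differentiation under the integral and integration by parts are justified, and note that the improper integrals converge because $\a\in(0,1)$.
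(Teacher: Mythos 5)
Your argument for \eqref{eq2} is correct and is exactly what the paper's ``direct calculations'' amount to: the difference $\LCI x(t)-\LCII x(t)$ reduces to $-\left[\frac{d}{dt}\frac{1}{\Gamma(1-\a)}\right]\int_a^t(t-\t)^{-\a}[x(\t)-x(a)]\,d\t$, and the chain rule together with $\Psi=\Gamma'/\Gamma$ gives the stated coefficient.

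For \eqref{eq1}, however, your setup contains a genuine error, and it sits precisely at the point the theorem is about. After integrating by parts in $\t$ (at fixed $t$) one gets
\begin{equation*}
\LCI x(t)=\frac{1}{\Gamma(1-\a)}\,\frac{d}{dt}\left[\frac{1}{1-\a}\int_a^t(t-\t)^{1-\a}x'(\t)\,d\t\right],
\end{equation*}
with $\frac{1}{\Gamma(1-\a)}$ \emph{outside} the $\frac{d}{dt}$ (it is outside in the definition of the type I operator) and the new factor $\frac{1}{1-\a}$ \emph{inside}. You rewrite this as $\frac{1}{\Gamma(2-\a)}\frac{d}{dt}\int_a^t(t-\t)^{1-\a}x'(\t)\,d\t$, which silently commutes the $t$-dependent factor $\frac{1}{1-\a}$ past the derivative; the dropped term,
\begin{equation*}
\frac{1}{\Gamma(1-\a)}\,\frac{\alpha'(t)}{(1-\a)^2}\int_a^t(t-\t)^{1-\a}x'(\t)\,d\t
=\frac{\alpha'(t)}{\Gamma(2-\a)}\int_a^t(t-\t)^{1-\a}\,\frac{x'(\t)}{1-\a}\,d\t,
\end{equation*}
is exactly the $\frac{1}{1-\a}$ half of the bracket in \eqref{eq1}. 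You then propose to recover something by differentiating the prefactor $\frac{1}{\Gamma(2-\a)}$, but that is the computation for the \emph{type II} operator, where the Gamma factor sits inside the $\frac{d}{dt}$; it produces the coefficient $\Psi(2-\a)$, and the claimed reorganization of ``the $\Psi$-term into $\frac{1}{1-\a}$'' fails because $\Psi(2-\a)=\Psi(1-\a)+\frac{1}{1-\a}\neq\frac{1}{1-\a}$. Carried out consistently, your version establishes the (correct, but different) identity $\LCII x(t)=\LCIII x(t)+\frac{\alpha'(t)}{\Gamma(2-\a)}\int_a^t(t-\t)^{1-\a}x'(\t)\left[\Psi(2-\a)-\ln(t-\t)\right]d\t$, not \eqref{eq1}; the discrepancy is precisely the $\Psi(1-\a)$ term of \eqref{eq2}. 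The remedy is the paper's bookkeeping: leave $\frac{1}{\Gamma(1-\a)}$ untouched outside, apply the product rule to $\frac{1}{1-\a}\cdot\int_a^t(\cdots)\,d\t$, and note that $\frac{d}{dt}\frac{1}{1-\a}=\frac{\alpha'(t)}{(1-\a)^2}$ supplies the $\frac{1}{1-\a}$ term, while the Leibniz differentiation of the integral supplies $\LCIII x(t)$ and the $-\ln(t-\t)$ term exactly as you describe for those two pieces.
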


\begin{proof}
Integrating by parts, one gets
\begin{equation*}
\begin{split}
\LCI x(t)&= \DS\frac{1}{\Gamma(1-\a)}
\frac{d}{dt}\int_a^t(t-\t)^{-\a}[x(\t)-x(a)]d\t\\
&= \DS\frac{1}{\Gamma(1-\a)}\frac{d}{dt}
\left[\frac{1}{1-\a}\int_a^t(t-\t)^{1-\a}x'(\t)d\t\right].
\end{split}
\end{equation*}
Differentiating the integral, it follows that
\begin{equation*}
\begin{split}
\LCI x(t)& \DS=\frac{1}{\Gamma(1-\a)}\left[
\frac{\alpha'(t)}{(1-\a)^2}\int_a^t(t-\t)^{1-\a}x'(\t)d\t\right.\\
&\DS\quad\left.+\frac{1}{1-\a}\int_a^t(t-\t)^{1-\a}x'(\t)\left[
-\alpha'(t)\ln(t-\t)+\frac{1-\a}{t-\t}\right]d\t\right]\\
&=\DS\LCIII x(t)+\frac{\alpha'(t)}{\Gamma(2-\a)}\int_a^t
(t-\t)^{1-\a}x'(\t)\left[\frac{1}{1-\a}-\ln(t-\t)\right]d\t.
\end{split}
\end{equation*}
The second formula follows from direct calculations.
\end{proof}

Therefore, when the order $\a \equiv c$ is a constant,
or for constant functions $x(t) \equiv k$, we have
$$
\LCI x(t)=\LCII x(t)=\LCIII x(t).
$$
Similarly, we obtain the next result.

\begin{Theorem}
The following relations between the right fractional operators hold:
$$
\RCI x(t) \DS=\DS\RCIII x(t)+\frac{\alpha'(t)}{\Gamma(2-\a)}
\int_t^b(\t-t)^{1-\a}x'(\t)\left[\frac{1}{1-\a}-\ln(\t-t)\right]d\t
$$
and
$$
\RCI x(t)=\RCII x(t)+\frac{\alpha'(t)\Psi(1-\a)}{\Gamma(1-\a)}
\int_t^b(\t-t)^{-\a}[x(\t)-x(b)]d\t.
$$
\end{Theorem}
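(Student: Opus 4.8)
The plan is to imitate, step for step, the proof of the preceding theorem for the left operators, paying attention to the fact that $t$ is now the lower endpoint of integration, so that a few signs change. First I would integrate by parts in
$$
\RCI x(t)=\frac{-1}{\Gamma(1-\a)}\frac{d}{dt}\int_t^b(\t-t)^{-\a}[x(\t)-x(b)]d\t,
$$
with $u=x(\t)-x(b)$ and $dv=(\t-t)^{-\a}d\t$. The integrated term vanishes, since $1-\a>0$ kills it at $\t=t$ while $x(b)-x(b)=0$ kills it at $\t=b$, and one is left with
$$
\RCI x(t)=\frac{1}{\Gamma(1-\a)}\frac{d}{dt}\left[\frac{1}{1-\a}\int_t^b(\t-t)^{1-\a}x'(\t)d\t\right],
$$
exactly as in the left-handed case.

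Next I would differentiate with respect to $t$, applying the product rule to the $t$-dependent factor $1/(1-\a)$ and to the integral. For the integral I would use Leibniz's rule: the boundary contribution of the moving lower limit is $-\big[(\t-t)^{1-\a}x'(\t)\big]_{\t=t}=0$ (again because $1-\a>0$), and under the integral sign one has $\frac{\partial}{\partial t}(\t-t)^{1-\a}=(\t-t)^{1-\a}\big[-\alpha'(t)\ln(\t-t)-\frac{1-\a}{\t-t}\big]$. Grouping the resulting three integrals, the term $\frac{-1}{\Gamma(1-\a)}\int_t^b(\t-t)^{-\a}x'(\t)d\t$ is precisely $\RCIII x(t)$, while the two remaining integrals combine — using $\Gamma(2-\a)=(1-\a)\Gamma(1-\a)$ — into $\frac{\alpha'(t)}{\Gamma(2-\a)}\int_t^b(\t-t)^{1-\a}x'(\t)\big[\frac{1}{1-\a}-\ln(\t-t)\big]d\t$, which is the first claimed identity.

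For the second identity I would just compare $\RCI$ and $\RCII$: the two differ only in whether the prefactor $1/\Gamma(1-\a)$ is placed inside or outside $d/dt$. Differentiating $\RCII x(t)=\frac{d}{dt}\big(\frac{-1}{\Gamma(1-\a)}\int_t^b(\t-t)^{-\a}[x(\t)-x(b)]d\t\big)$ by the product rule, and evaluating $\frac{d}{dt}\big[\frac{1}{\Gamma(1-\a)}\big]=\frac{\alpha'(t)\Psi(1-\a)}{\Gamma(1-\a)}$ via the chain rule together with the definition $\Psi=\Gamma'/\Gamma$, one gets $\RCII x(t)$ equal to $\RCI x(t)$ minus the asserted correction term; rearranging finishes the proof. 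The computation is entirely routine; the only place that genuinely needs care is the sign bookkeeping in the Leibniz step, since $t$ being the lower (rather than the upper) limit reverses the sign of both the boundary term and the $\partial_t(\t-t)^{1-\a}$ contribution relative to the left-operator theorem.
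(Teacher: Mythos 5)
Your proposal is correct and follows exactly the route the paper takes: the paper proves the left-operator version by integrating by parts and then differentiating under the integral sign (with the second identity obtained by "direct calculations", i.e.\ the product rule applied to the $t$-dependent factor $1/\Gamma(1-\a)$), and states the right-operator theorem without proof as being "similar". Your sign bookkeeping in the Leibniz step, the identification of the $\RCIII$ term, the use of $\Gamma(2-\a)=(1-\a)\Gamma(1-\a)$, and the computation $\frac{d}{dt}\bigl[\Gamma(1-\a)^{-1}\bigr]=\alpha'(t)\Psi(1-\a)/\Gamma(1-\a)$ are all accurate.
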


\begin{Theorem}
\label{initialpoint}
Let $x\in C^1\left([a,b],\mathbb{R}\right)$. At $t=a$
$$
\LCI x(t)=\LCII x(t)=\LCIII x(t)=0;
$$
at $t=b$
$$
\RCI x(t)=\RCII x(t)=\RCIII x(t)=0.
$$
\end{Theorem}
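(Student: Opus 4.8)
The plan is to handle the type~III operators first, since they are convergent improper integrals to begin with, and then to transfer the conclusion to types~I and~II through the identities \eqref{eq1} and \eqref{eq2} and their right-sided counterparts in the preceding theorem on the right operators. Two standing facts are used throughout. Since $\alpha$ is continuous with values in $(0,1)$ on the compact interval $[a,b]$ (and, as tacitly assumed wherever $\alpha'$ appears, continuously differentiable), there are constants $0<\alpha_-\le\alpha(t)\le\alpha_+<1$; hence $1-\a$ stays bounded away from $0$, $\alpha'$ is bounded, and $\Gamma(1-\a)$, $\Gamma(2-\a)$, $\Psi(1-\a)$ stay bounded on $[a,b]$ (the last because $1-\a$ never reaches a pole of $\Psi$). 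Also, $x\in C^1$ gives $\|x'\|_\infty<\infty$ together with $|x(\t)-x(a)|\le\|x'\|_\infty(\t-a)$. Because the type~I operator is defined through $\frac{d}{dt}$ of an integral, I read ``at $t=a$'' as the one-sided limit $t\to a^+$ (and ``at $t=b$'' as $t\to b^-$); rewriting $\LCI x$ via \eqref{eq1} as a sum of ordinary integrals is what makes the limit manageable. For the type~III case itself one estimates, for $t>a$,
$$
\left|\LCIII x(t)\right|\le\frac{\|x'\|_\infty}{\Gamma(1-\a)}\int_a^t(t-\t)^{-\a}\,d\t
=\frac{\|x'\|_\infty}{\Gamma(2-\a)}\,(t-a)^{1-\a}\le\frac{\|x'\|_\infty}{\Gamma(2-\a)}\,(t-a)^{1-\alpha_+},
$$
which tends to $0$ as $t\to a^+$ since $1-\alpha_+>0$ (at $t=a$ the defining integral is over a degenerate interval, hence zero), and the same estimate with $(\t-t)$ in place of $(t-\t)$ gives $\RCIII x(t)\to0$ as $t\to b^-$.

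For type~I, in \eqref{eq1} the first term on the right is $\LCIII x$, already controlled, while the second is bounded in absolute value by
$$
\frac{|\alpha'(t)|\,\|x'\|_\infty}{\Gamma(2-\a)}\int_a^t(t-\t)^{1-\a}\left[\frac{1}{1-\a}-\ln(t-\t)\right]d\t ,
$$
the bracket being positive once $t-a<1$. With the substitution $u=t-\t$ and the elementary formulas $\int_0^{h}u^{1-\a}\,du=\frac{h^{2-\a}}{2-\a}$ and $\int_0^{h}u^{1-\a}(-\ln u)\,du=\frac{h^{2-\a}}{2-\a}\left(\frac{1}{2-\a}-\ln h\right)$, taken at $h=t-a$, this quantity is a bounded multiple of $h^{2-\a}+h^{2-\a}|\ln h|$, which tends to $0$ as $t\to a^+$ (uniformly in the moving exponent, thanks to $\alpha_\pm$). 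Hence $\LCI x(t)\to0$; the right-sided identity of the preceding theorem yields $\RCI x(t)\to0$ in exactly the same way.

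Finally, for type~II, rearranging \eqref{eq2} gives
$$
\LCII x(t)=\LCI x(t)+\frac{\alpha'(t)\Psi(1-\a)}{\Gamma(1-\a)}\int_a^t(t-\t)^{-\a}\,[x(\t)-x(a)]\,d\t ;
$$
the first term tends to $0$ by the previous paragraph, and for the integral, using $|x(\t)-x(a)|\le\|x'\|_\infty(t-a)$,
$$
\left|\int_a^t(t-\t)^{-\a}[x(\t)-x(a)]\,d\t\right|\le\|x'\|_\infty(t-a)\int_a^t(t-\t)^{-\a}\,d\t=\frac{\|x'\|_\infty}{1-\a}\,(t-a)^{2-\a}\to0 ,
$$
while the prefactor stays bounded near $t=a$; so $\LCII x(t)\to0$, and symmetrically $\RCII x(t)\to0$ as $t\to b^-$. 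The only real subtlety in the whole argument is the bookkeeping just mentioned — the meaning of ``at the endpoint'' for the type~I operator and the need for uniformity in the exponent $\a=\alpha(t)$ — both dispatched by \eqref{eq1} and by the two-sided bound $0<\alpha_-\le\alpha(t)\le\alpha_+<1$; the remaining estimates are routine bounds for Abel-type integrals near an endpoint.
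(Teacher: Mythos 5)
Your proof is correct and follows essentially the same route as the paper: the type~III case is handled by the direct bound $\frac{\|x'\|}{\Gamma(2-\a)}(t-a)^{1-\a}$, and types~I and~II are reduced to it via the identities \eqref{eq1} and \eqref{eq2} with routine Abel-type estimates near the endpoint. Your use of explicit antiderivatives for the logarithmic integral (where the paper integrates by parts), and your care about the limit interpretation and the uniform bounds $0<\alpha_-\le\a\le\alpha_+<1$, are only minor presentational differences.
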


\begin{proof}
We start proving the third equality at the initial time $t=a$. We simply note that
$$
\left|\LCIII x(t)\right|\leq \frac{\|x'\|}{\Gamma(1-\a)}\int_a^t(t-\t)^{-\a}d\t
=\frac{\|x'\|}{\Gamma(2-\a)}(t-a)^{1-\a},
$$
which is zero at $t=a$. For the first equality at $t=a$,
using equation \eqref{eq1}, and the two next relations
$$
\left|\int_a^t(t-\t)^{1-\a}\frac{x'(\t)}{1-\a}d\t\right|
\leq \frac{\|x'\|}{(1-\a)(2-\a)}(t-a)^{2-\a}
$$
and
$$
\left|\int_a^t(t-\t)^{1-\a}x'(\t)\ln(t-\t)d\t\right|
\leq \frac{\|x'\|}{2-\a}(t-a)^{2-\a}\left|\ln(t-a)-\frac{1}{2-\a}\right|,
$$
this latter inequality obtained from integration by parts,
we prove that $\LCI x(t)=0$ at $t=a$. Finally, we prove the second equality at $t=a$
by considering equation \eqref{eq2}: performing an integration by parts, we get
$$
\left|\int_a^t(t-\t)^{-\a}[x(\t)-x(a)]d\t\right|\leq \frac{\|x'\|}{(1-\a)(2-\a)}(t-a)^{2-\a}
$$
and so $\LCII x(t)=0$ at $t=a$. The proof that the right
fractional operators also vanish at the end point $t=b$
follows by similar arguments.
\end{proof}

With some computations, a relationship between the Riemann--Liouville
and the Caputo fractional derivatives is easily deduced:
\begin{equation*}
\begin{split}
{_aD_t^\a}x(t)&=\displaystyle\LCI x(t)
+\frac{x(a)}{\Gamma(1-\a)}\frac{d}{dt}\int_a^t(t-\t)^{-\a}d\t\\
&=\displaystyle\LCI x(t)+\frac{x(a)}{\Gamma(1-\a)}(t-a)^{-\a}\\
&\qquad\displaystyle+\frac{x(a)\alpha'(t)}{\Gamma(2-\a)}
(t-a)^{1-\a}\left[\frac{1}{1-\a}-\ln(t-a)\right]
\end{split}
\end{equation*}
and
\begin{equation*}
\begin{split}
{_a\mathcal{D}_t^\a}x(t)&=\displaystyle\LCII x(t)
+x(a)\frac{d}{dt}\left(\frac{1}{\Gamma(1-\a)}\int_a^t(t-\t)^{-\a}d\t\right)\\
&=\displaystyle\LCII x(t)+\frac{x(a)}{\Gamma(1-\a)}(t-a)^{-\a}\\
&\qquad\displaystyle+\frac{x(a)\alpha'(t)}{\Gamma(2-\a)}
(t-a)^{1-\a}\left[\Psi(2-\a)-\ln(t-a)\right].
\end{split}
\end{equation*}
For the right fractional operators, we have
\begin{equation*}
\begin{split}
{_tD_b^\a}x(t)&=\displaystyle\RCI x(t)+\frac{x(b)}{\Gamma(1-\a)}(b-t)^{-\a}\\
&\qquad\displaystyle-\frac{x(b)\alpha'(t)}{\Gamma(2-\a)}
(b-t)^{1-\a}\left[\frac{1}{1-\a}-\ln(b-t)\right]
\end{split}
\end{equation*}
and
\begin{equation*}
\begin{split}
{_t\mathcal{D}_b^\a}x(t)&=\displaystyle\RCII x(t)+\frac{x(b)}{\Gamma(1-\a)}(b-t)^{-\a}\\
&\qquad\displaystyle-\frac{x(b)\alpha'(t)}{\Gamma(2-\a)}
(b-t)^{1-\a}\left[\Psi(2-\a)-\ln(b-t)\right].
\end{split}
\end{equation*}
Thus, it is immediate to conclude that if $x(a)=0$, then
$$
{_aD_t^\a}x(t)=\LCI x(t) \quad \mbox{and} \quad {_a\mathcal{D}_t^\a}x(t)=\LCII x(t)
$$
and if $x(b)=0$, then
$$
{_tD_b^\a}x(t)=\RCI x(t) \quad \mbox{and} \quad {_t\mathcal{D}_b^\a}x(t)=\RCII x(t).
$$

Next we obtain formulas for the Caputo fractional derivatives of a power function.

\begin{Lemma}
\label{LemmaEx}
Let $x(t)=(t-a)^\gamma$ with $\gamma>0$. Then,
\begin{equation*}
\begin{split}
\LCI x(t) &=\DS \frac{\Gamma(\gamma+1)}{\Gamma(\gamma-\a+1)}(t-a)^{\gamma-\a}\\
&\DS\quad -\alpha'(t)\frac{\Gamma(\gamma+1)}{\Gamma(\gamma-\a+2)}(t-a)^{\gamma-\a+1}\\
&\DS \quad \times \left[\ln(t-a)-\Psi(\gamma-\a+2)+\Psi(1-\a)\right],\\
\LCII x(t) &=\DS \frac{\Gamma(\gamma+1)}{\Gamma(\gamma-\a+1)}(t-a)^{\gamma-\a}\\
&\DS\quad -\alpha'(t)\frac{\Gamma(\gamma+1)}{\Gamma(\gamma-\a+2)}(t-a)^{\gamma-\a+1}\\
&\DS \quad \times \left[\ln(t-a)-\Psi(\gamma-\a+2)\right],\\
\LCIII x(t) &=\DS  \frac{\Gamma(\gamma+1)}{\Gamma(\gamma-\a+1)}(t-a)^{\gamma-\a}.
\end{split}
\end{equation*}
\end{Lemma}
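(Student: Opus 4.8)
The plan is to prove the formula for the type III operator by a direct computation, and then to use the identities \eqref{eq1} and \eqref{eq2} of the first theorem to reduce the type I and type II cases to the type III case plus one auxiliary integral in each case.

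First I would insert $x'(\t)=\gamma(\t-a)^{\gamma-1}$ into the definition of $\LCIII x(t)$ and evaluate
$$
\int_a^t(t-\t)^{-\a}(\t-a)^{\gamma-1}\,d\t
$$
by the substitution $\t=a+s(t-a)$, which turns it into $(t-a)^{\gamma-\a}B(1-\a,\gamma)=(t-a)^{\gamma-\a}\Gamma(1-\a)\Gamma(\gamma)/\Gamma(\gamma-\a+1)$. After cancelling $\Gamma(1-\a)$ and using $\gamma\Gamma(\gamma)=\Gamma(\gamma+1)$, the claimed expression for $\LCIII x(t)$ drops out.

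Next, for $\LCI x(t)$ I would invoke \eqref{eq1}, so it remains to compute
$$
\int_a^t(t-\t)^{1-\a}x'(\t)\left[\frac{1}{1-\a}-\ln(t-\t)\right]d\t .
$$
The non-logarithmic part is again a Beta integral, handled as above with exponent $2-\a$ instead of $1-\a$. The logarithmic part is the main obstacle: I would evaluate it through the parameter-differentiation identity
$$
\int_a^t(t-\t)^{s-1}(\t-a)^{\gamma-1}\ln(t-\t)\,d\t
=\frac{\partial}{\partial s}\left[(t-a)^{s+\gamma-1}\frac{\Gamma(s)\Gamma(\gamma)}{\Gamma(s+\gamma)}\right],
$$
which yields a factor $\ln(t-a)+\Psi(s)-\Psi(s+\gamma)$ at $s=2-\a$. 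Collecting the two contributions and using the recurrence $\Psi(2-\a)=\Psi(1-\a)+1/(1-\a)$ to cancel the $1/(1-\a)$ term against $\Psi(2-\a)$, the bracket collapses (up to an overall sign) to $\ln(t-a)-\Psi(\gamma-\a+2)+\Psi(1-\a)$, producing exactly the stated formula for $\LCI x(t)$.

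Finally, for $\LCII x(t)$ I would use \eqref{eq2}, which reduces matters to computing $\int_a^t(t-\t)^{-\a}[x(\t)-x(a)]\,d\t=\int_a^t(t-\t)^{-\a}(\t-a)^{\gamma}\,d\t$, once more a Beta integral equal to $(t-a)^{\gamma-\a+1}\Gamma(1-\a)\Gamma(\gamma+1)/\Gamma(\gamma-\a+2)$. Adding $\alpha'(t)\Psi(1-\a)/\Gamma(1-\a)$ times this quantity to the formula already obtained for $\LCI x(t)$, the two $\Psi(1-\a)$ terms cancel and the formula for $\LCII x(t)$ follows. Apart from the logarithmic Beta integral, all steps are routine; the only real care required is the bookkeeping of the Gamma and Psi identities.
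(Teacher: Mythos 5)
Your proof is correct, but it takes a genuinely different route from the paper's. The paper proves the formula for $\LCI x(t)$ by simply citing the reference of Samko and Ross, and obtains the formula for $\LCII x(t)$ by first evaluating $\frac{1}{\Gamma(1-\a)}\int_a^t(t-\t)^{-\a}(\t-a)^{\gamma}d\t$ in closed form as $\frac{\Gamma(\gamma+1)}{\Gamma(\gamma-\a+2)}(t-a)^{\gamma-\a+1}$ via the Beta function, and then applying $\frac{d}{dt}$ to that closed form --- the $\ln(t-a)$ and $\Psi(\gamma-\a+2)$ terms arise there from differentiating $(t-a)^{\gamma-\a+1}$ and $1/\Gamma(\gamma-\a+2)$ with respect to $t$ through the $t$-dependence of $\a$; the type III case is left as ``similar.'' You instead make type III the base case and derive types I and II from the relations \eqref{eq1} and \eqref{eq2}, which forces you to evaluate the logarithmic Beta integral by parameter differentiation of $(t-a)^{s+\gamma-1}B(s,\gamma)$ in $s$ --- a correct identity, and your bookkeeping with $\Psi(2-\a)=\Psi(1-\a)+\frac{1}{1-\a}$ and the cancellation of the $\Psi(1-\a)$ terms in the type II case both check out. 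What your approach buys is self-containedness (no external citation for the type I formula) and a uniform mechanism that explains where each $\Psi$ term comes from; what the paper's approach buys is brevity for type II, since differentiating a single closed-form product is shorter than evaluating a log-weighted integral. One small point worth a sentence in a final write-up: for $0<\gamma<1$ the derivative $x'(\t)=\gamma(\t-a)^{\gamma-1}$ is unbounded at $\t=a$, so you should note that the integrals in \eqref{eq1} still converge and that the integration by parts underlying \eqref{eq1} remains valid (the boundary term vanishes); the paper's direct computation for type II sidesteps $x'$ entirely.
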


\begin{proof}
The formula for $\LCI x(t)$ follows immediately
from \cite{SamkoRoss}. For the second equality, one has
\begin{equation*}
\begin{split}
\LCII x(t) &=\DS\frac{d}{dt}\left(\frac{1}{\Gamma(1-\a)}
\int_a^t(t-\t)^{-\a}(\t-a)^{\gamma} d\t\right)\\
 &=\DS\frac{d}{dt}\left(\frac{1}{\Gamma(1-\a)}\int_a^t
 (t-a)^{-\a}\left(1-\frac{\t-a}{t-a}\right)^{-\a}(\t-a)^{\gamma} d\t\right).
\end{split}
\end{equation*}
With the change of variables $\t-a=s(t-a)$,
and with the help of the Beta function $B(\cdot,\cdot)$, we prove that
\begin{equation*}
\begin{split}
\LCII x(t) &=\DS\frac{d}{dt}\left(\frac{(t-a)^{-\a}}{\Gamma(1-\a)}
\int_0^1(1-s)^{-\a}s^{\gamma}(t-a)^{\gamma+1} ds\right)\\
&=\DS\frac{d}{dt}\left(\frac{(t-a)^{\gamma-\a+1}}{\Gamma(1-\a)}B(\gamma+1,1-\a)\right)\\
 &= \DS\frac{d}{dt}\left(\frac{\Gamma(\gamma+1)}{\Gamma(\gamma-\a+2)}(t-a)^{\gamma-\a+1}\right).
\end{split}
\end{equation*}
We obtain the desired formula by differentiating this latter expression.
The last equality follows in a similar way.
\end{proof}

Analogous relations to those of Lemma~\ref{LemmaEx},
for the right Caputo fractional derivatives of variable order,
are easily obtained.

\begin{Lemma}
\label{LemmaEx2}
Let $x(t)=(b-t)^\gamma$ with $\gamma>0$. Then,
\begin{equation*}
\begin{split}
\RCI x(t) &=\DS \frac{\Gamma(\gamma+1)}{\Gamma(\gamma-\a+1)}(b-t)^{\gamma-\a}\\
&\DS\quad +\alpha'(t)\frac{\Gamma(\gamma+1)}{\Gamma(\gamma-\a+2)}(b-t)^{\gamma-\a+1}\\
&\DS \quad \times \left[\ln(b-t)-\Psi(\gamma-\a+2)+\Psi(1-\a)\right],\\
\RCII x(t) &=\DS \frac{\Gamma(\gamma+1)}{\Gamma(\gamma-\a+1)}(b-t)^{\gamma-\a}\\
&\DS\quad +\alpha'(t)\frac{\Gamma(\gamma+1)}{\Gamma(\gamma-\a+2)}(b-t)^{\gamma-\a+1}\\
&\DS \quad \times \left[\ln(b-t)-\Psi(\gamma-\a+2)\right],\\
\RCIII x(t) &=\DS  \frac{\Gamma(\gamma+1)}{\Gamma(\gamma-\a+1)}(b-t)^{\gamma-\a}.
\end{split}
\end{equation*}
\end{Lemma}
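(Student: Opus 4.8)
The plan is to follow the proof of Lemma~\ref{LemmaEx} almost verbatim, replacing $(t-a)$ by $(b-t)$ and tracking the sign changes, since the two situations are mirror images. The only computation that needs genuine care is the differentiation step for the type II operator.

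First I would handle $\RCIII x(t)$. Substituting $x'(\t)=-\gamma(b-\t)^{\gamma-1}$ into the definition gives $\RCIII x(t)=\frac{\gamma}{\Gamma(1-\a)}\int_t^b(\t-t)^{-\a}(b-\t)^{\gamma-1}\,d\t$. With the change of variables $\t-t=s(b-t)$ this becomes $\frac{\gamma(b-t)^{\gamma-\a}}{\Gamma(1-\a)}\int_0^1 s^{-\a}(1-s)^{\gamma-1}\,ds=\frac{\gamma(b-t)^{\gamma-\a}}{\Gamma(1-\a)}B(1-\a,\gamma)$, and using $B(1-\a,\gamma)=\Gamma(1-\a)\Gamma(\gamma)/\Gamma(\gamma-\a+1)$ together with $\Gamma(\gamma+1)=\gamma\,\Gamma(\gamma)$ produces the stated formula.

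Next, for $\RCII x(t)$ I would note that $x(b)=0$ because $\gamma>0$, so the bracket $[x(\t)-x(b)]$ reduces to $(b-\t)^\gamma$. The same change of variables gives $\int_t^b(\t-t)^{-\a}(b-\t)^\gamma\,d\t=(b-t)^{\gamma-\a+1}B(1-\a,\gamma+1)=(b-t)^{\gamma-\a+1}\Gamma(1-\a)\Gamma(\gamma+1)/\Gamma(\gamma-\a+2)$, hence $\RCII x(t)=\frac{d}{dt}\left(-\frac{\Gamma(\gamma+1)}{\Gamma(\gamma-\a+2)}(b-t)^{\gamma-\a+1}\right)$. Carrying out this differentiation is the main (and only) obstacle: since $\a=\alpha(t)$, the chain rule gives $\frac{d}{dt}\ln(b-t)=-1/(b-t)$ and $\frac{d}{dt}\big[\Gamma(\gamma-\a+2)^{-1}\big]=\alpha'(t)\Psi(\gamma-\a+2)/\Gamma(\gamma-\a+2)$; collecting terms, one uses $\Gamma(\gamma-\a+2)=(\gamma-\a+1)\Gamma(\gamma-\a+1)$ to see that the part free of $\alpha'(t)$ collapses to $\frac{\Gamma(\gamma+1)}{\Gamma(\gamma-\a+1)}(b-t)^{\gamma-\a}$, while the remaining part is precisely the $\alpha'(t)$-term with factor $[\ln(b-t)-\Psi(\gamma-\a+2)]$.

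Finally, for $\RCI x(t)$ I would invoke the right-operator counterpart of~\eqref{eq2} established in the theorem on the relations between the right fractional operators, namely $\RCI x(t)=\RCII x(t)+\frac{\alpha'(t)\Psi(1-\a)}{\Gamma(1-\a)}\int_t^b(\t-t)^{-\a}[x(\t)-x(b)]\,d\t$; the integral appearing here was already evaluated in the $\RCII$ step, so substituting it merely inserts a $+\Psi(1-\a)$ inside the bracket of the $\RCII$ formula, which is exactly the claimed expression. Alternatively, $\RCI x(t)$ may be read off directly from the variable-order power rule in~\cite{SamkoRoss}, just as was done for the left derivative in Lemma~\ref{LemmaEx}. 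Apart from the bookkeeping in the $\RCII$ differentiation, everything is a routine transcription.
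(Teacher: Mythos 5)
Your proof is correct and follows essentially the same route as the paper, which in fact omits the proof of this lemma entirely, stating only that the right-derivative formulas are ``easily obtained'' by analogy with Lemma~\ref{LemmaEx}; your Beta-function computations for $\RCIII$ and $\RCII$ and the sign bookkeeping in the variable-order differentiation are exactly what that analogy requires. The only (harmless) deviation is that you obtain $\RCI$ from the relation between the type I and type II right operators rather than reading it off from \cite{SamkoRoss}, which is if anything more self-contained.
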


With Lemma~\ref{LemmaEx} in mind, we immediately see that
$\LCI x(t)\not=\LCII x(t)\not=\LCIII x(t)$. Also,
at least for the power function, it suggests that $\LCIII x(t)$
may be a more suitable inverse operation
of the fractional integral when the order is variable.
For example, consider functions $x(t)=t^2$ and $y(t)=(1-t)^2$,
and the fractional order $\a=\frac{5t+1}{10}$, $t\in[0,1]$.
Then, $0.1\leq \a \leq 0.6$ for all $t$. Next we compare
the fractional derivatives of $x$ and $y$ of order $\a$
with the fractional derivatives of constant order $\alpha=0.1$ and $\alpha=0.6$.
By Lemma~\ref{LemmaEx}, we know that the left Caputo fractional
derivatives of order $\a$ of $x$ are given by
\begin{equation*}
\begin{split}
{^C_0D_t^{\a}} x(t)
&=\DS \frac{2}{\Gamma(3-\a)}t^{2-\a}
-\frac{t^{3-\a}}{\Gamma(4-\a)}\left[\ln(t)-\Psi(4-\a)+\Psi(1-\a)\right],\\
{^C_0\mathcal{D}_t^{\a}} x(t) &=\DS \frac{2}{\Gamma(3-\a)}t^{2-\a}
-\frac{t^{3-\a}}{\Gamma(4-\a)}\left[\ln(t)-\Psi(4-\a)\right],\\
{^C_0\mathbb{D}_t^{\a}} x(t) &=\DS  \frac{2}{\Gamma(3-\a)}t^{2-\a},
\end{split}
\end{equation*}
while by Lemma~\ref{LemmaEx2}, the right Caputo
fractional derivatives of order $\a$ of $y$  are given by
\begin{equation*}
\begin{split}
{^C_tD_1^{\a}} y(t) &=\DS \frac{2}{\Gamma(3-\a)}(1-t)^{2-\a}
+\frac{(1-t)^{3-\a}}{\Gamma(4-\a)}\left[\ln(1-t)-\Psi(4-\a)+\Psi(1-\a)\right],\\
{^C_t\mathcal{D}_1^{\a}} y(t) &=\DS \frac{2}{\Gamma(3-\a)}(1-t)^{2-\a}
+\frac{(1-t)^{3-\a}}{\Gamma(4-\a)}\left[\ln(1-t)-\Psi(4-\a)\right],\\
{^C_t\mathbb{D}_1^{\a}} y(t) &=\DS  \frac{2}{\Gamma(3-\a)}(1-t)^{2-\a}.
\end{split}
\end{equation*}
For a constant order $\alpha$, we have
$$
{^C_0D_t^{\alpha}} x(t) = \frac{2}{\Gamma(3-\alpha)}t^{2-\alpha}
\quad \mbox{and} \quad {^C_tD_1^{\alpha}} y(t)
=\frac{2}{\Gamma(3-\alpha)}(1-t)^{2-\alpha}.
$$
The results can be seen in Figure~\ref{comparison}.
\begin{figure}[!ht]
\begin{center}
\subfigure[${^C_0{D}_t^{\alpha(t)}} x(t)$]{\includegraphics[scale=0.3]{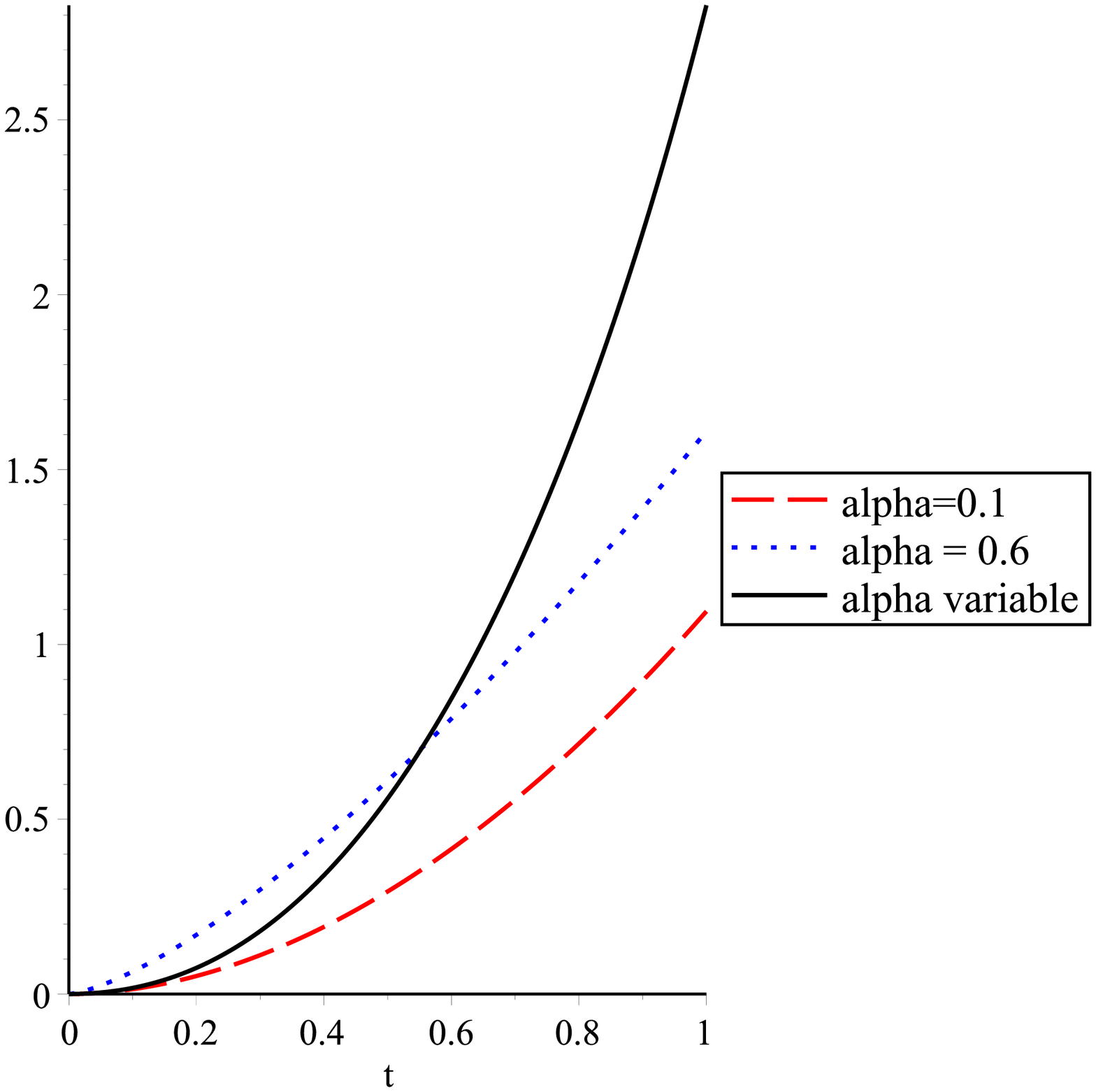}} \hspace{1cm}
\subfigure[${^C_0\mathcal{D}_t^{\alpha(t)}} x(t)$]{\includegraphics[scale=0.3]{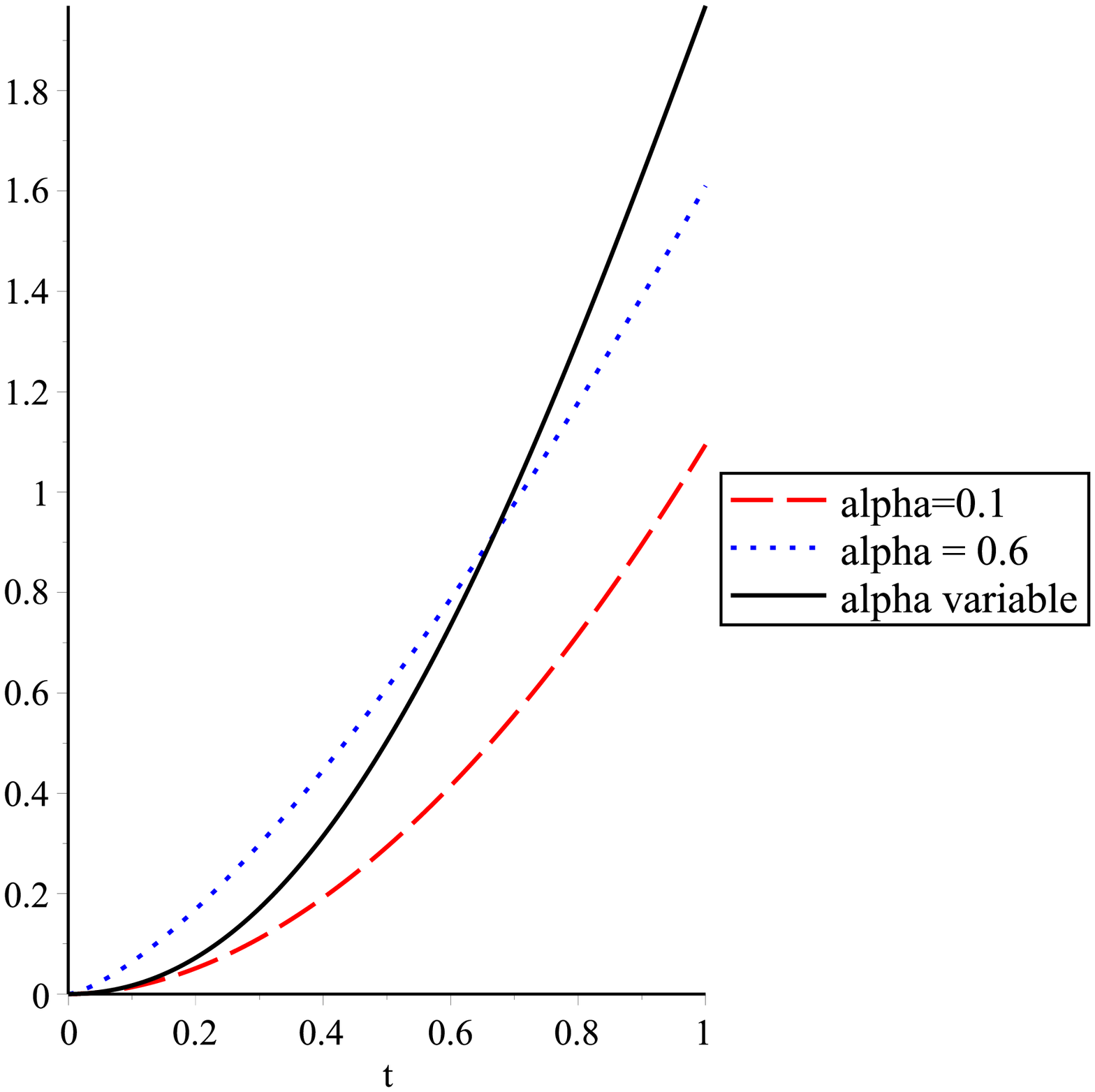}}
\subfigure[${^C_0\mathbb{D}_t^{\alpha(t)}} x(t)$]{\includegraphics[scale=0.3]{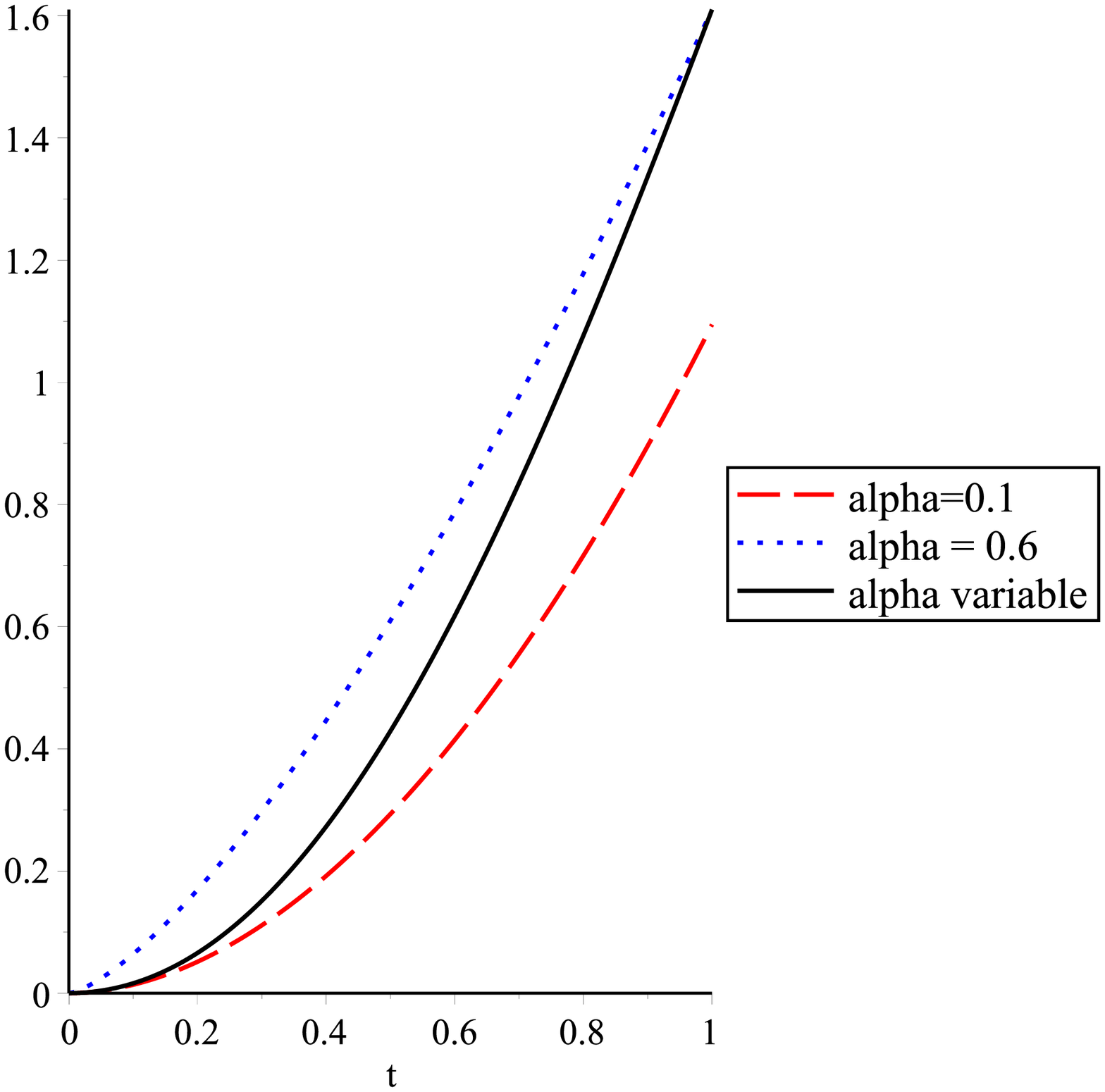}} \hspace{1cm}
\subfigure[${^C_t{D}_1^{\alpha(t)}} y(t)$]{\includegraphics[scale=0.3]{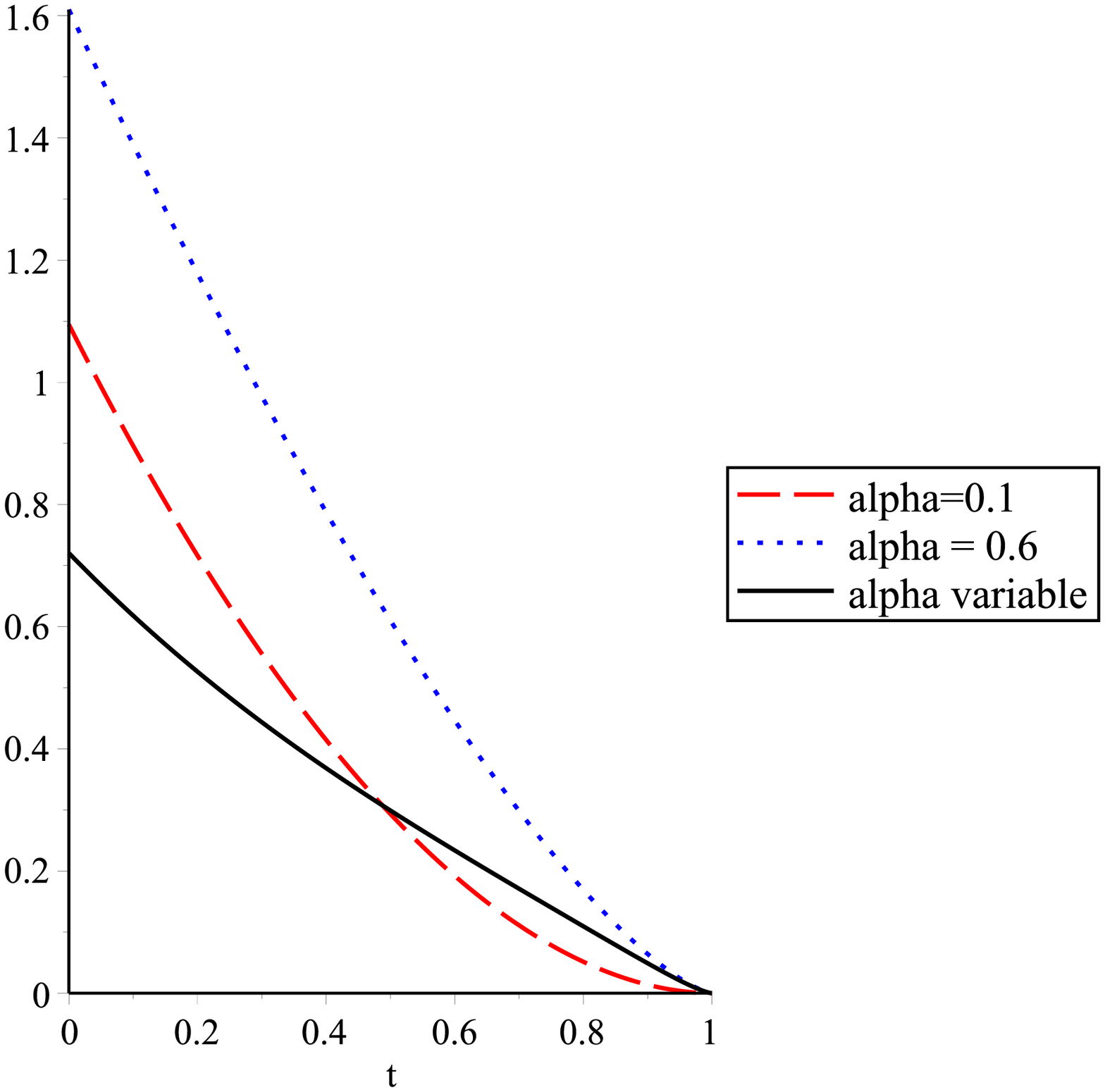}}
\subfigure[${^C_t\mathcal{D}_1^{\alpha(t)}} y(t)$]{\includegraphics[scale=0.3]{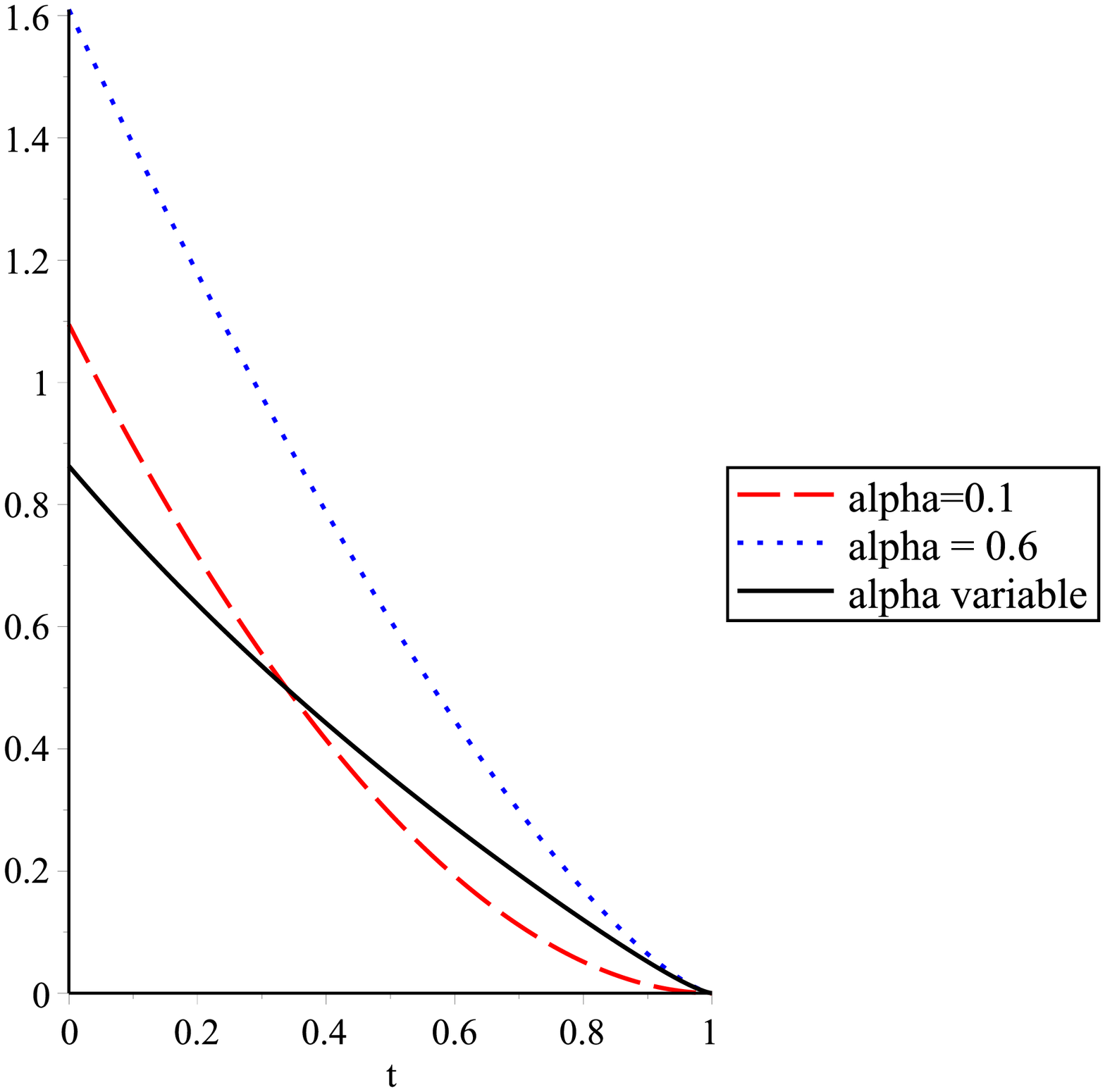}} \hspace{1cm}
\subfigure[${^C_t\mathbb{D}_1^{\alpha(t)}} y(t)$]{\includegraphics[scale=0.3]{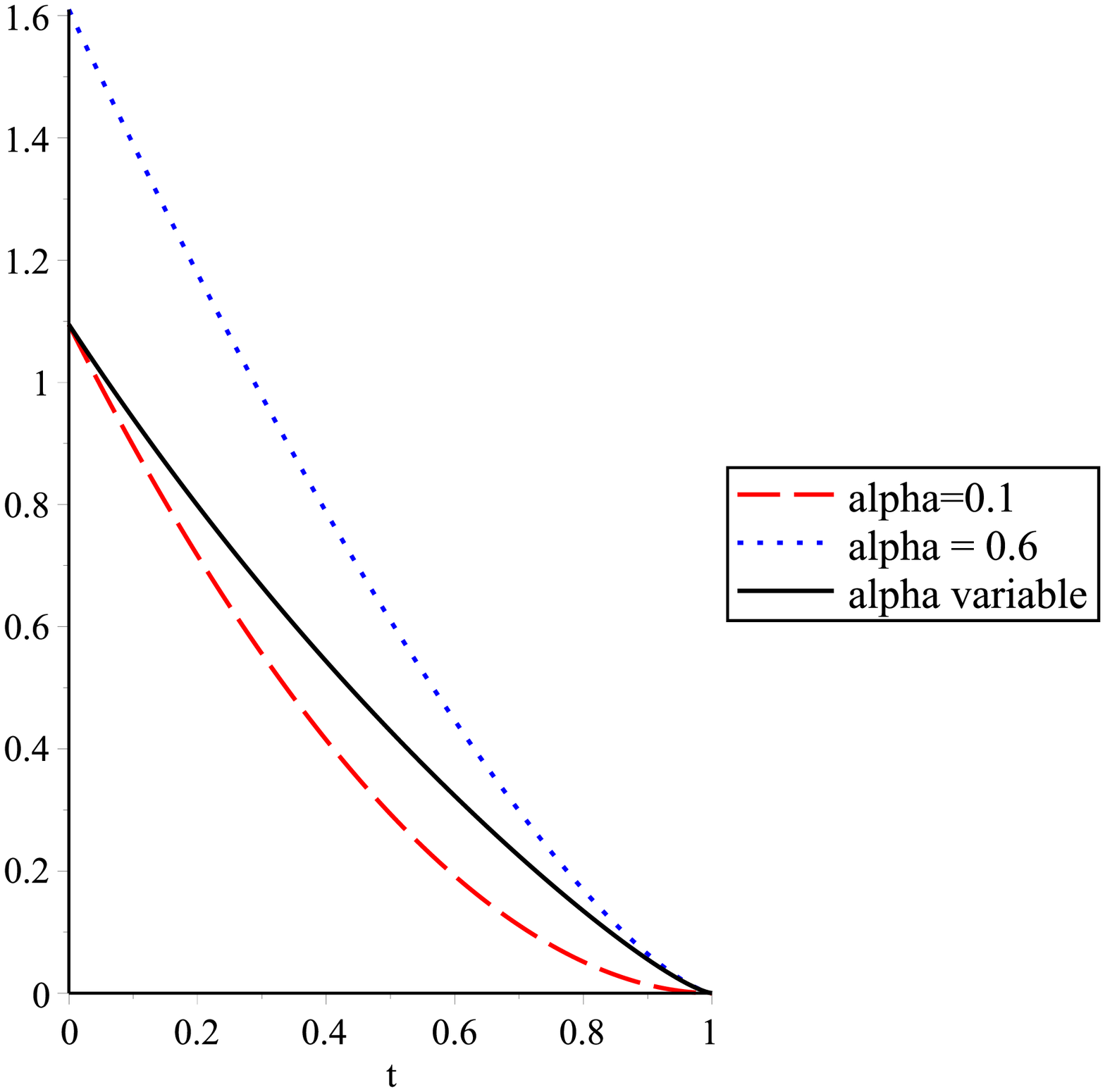}}
\end{center}
\caption{Comparison between variable order and constant order fractional derivatives.}\label{comparison}
\label{IntExp1}
\end{figure}


\subsection{Variable order Caputo derivatives for functions of several variables}
\label{sec:2.2}

Partial fractional derivatives are a natural extension
and are defined in a similar way. Let $m\in\mathbb{N}$,
$k\in\{1,\ldots,m\}$, and consider a function
$\DS x:\prod_{i=1}^m[a_i,b_i]\to\mathbb{R}$
with $m$ variables. For simplicity, we define the vectors
$$
[\t]_k=(t_1,\ldots,t_{k-1},\t,t_{k+1},\ldots,t_m)\in\mathbb{R}^m
$$
and
$$
(\overline t)=(t_1,\ldots,t_m)\in\mathbb{R}^m.
$$

\begin{Definition}[Partial Caputo fractional derivatives of variable order---types I, II and III]
\label{def:8}
Given a function $x:\prod_{i=1}^m[a_i,b_i]\to\mathbb{R}$
and fractional orders $\alpha_k:[a_k,b_k]\to(0,1)$,
$k\in\{1,\ldots,m\}$,
\begin{enumerate}
\item the type I partial left Caputo derivative of order $\ak$ is defined by
$$
\PLCI x(\overline t)=\frac{1}{\Gamma(1-\ak)}\frac{\partial}{\partial t_k}
\int_{a_k}^{t_k}(t_k-\t)^{-\ak}\left(x[\t]_k-x[a_k]_k\right)d\t;
$$
\item the type I partial right Caputo derivative of order $\ak$ is defined by
$$
\PRCI x(\overline t)=\frac{-1}{\Gamma(1-\ak)}\frac{\partial }{\partial t_k}
\int_{t_k}^{b_k}(\t-t_k)^{-\ak}\left(x[\t]_k-x[b_k]_k\right)d\t;
$$
\item the type II partial left Caputo derivative of order $\ak$ is defined by
$$
\PLCII x(\overline t)=\frac{\partial }{\partial t_k}\left(\frac{1}{\Gamma(1-\ak)}
\int_{a_k}^{t_k}(t_k-\t)^{-\ak}\left(x[\t]_k-x[a_k]_k\right)d\t\right);
$$
\item the type II partial right Caputo derivative of order $\ak$ is defined by
$$
\PRCII x(\overline t)=\frac{\partial }{\partial t_k}\left(\frac{-1}{\Gamma(1-\ak)}
\int_{t_k}^{b_k}(\t-t_k)^{-\ak}\left(x[\t]_k-x[b_k]_k\right)d\t\right);
$$
\item the type III partial left Caputo derivative of order $\ak$ is defined by
$$
\PLCIII x(\overline t)=\frac{1}{\Gamma(1-\ak)}
\int_{a_k}^{t_k}(t_k-\t)^{-\ak}\frac{\partial x}{\partial t_k}[\t]_kd\t;
$$
\item the type III partial right Caputo derivative of order $\ak$ is defined by
$$
\PRCIII x(\overline t)=\frac{-1}{\Gamma(1-\ak)}
\int_{t_k}^{b_k}(\t-t_k)^{-\ak}\frac{\partial x}{\partial t_k}[\t]_kd\t.
$$
\end{enumerate}
\end{Definition}

Similarly as done before, relations between these definitions can be proven.

\begin{Theorem}
The following four formulas hold:
\begin{multline}
\label{relation}
\PLCI x(\overline t)=\PLCIII x(\overline t)\\
+\frac{\Dak}{\Gamma(2-\ak)}\int_{a_k}^{t_k}(t_k-\t)^{1-\ak}
\frac{\partial x}{\partial t_k}[\t]_k\left[\frac{1}{1-\ak}-\ln(t_k-\t)\right]d\t,
\end{multline}
\begin{equation}
\label{relation3}
\PLCI x(\overline t)=\PLCII x(\overline t)
-\frac{\Dak\Psi(1-\ak)}{\Gamma(1-\ak)}\int_{a_k}^{t_k}(t_k-\t)^{-\ak}[x[\t]_k-x[a_k]_k]d\t,
\end{equation}
\begin{multline*}
\PRCI x(\overline t)=\PRCIII x(\overline t)\\
+\frac{\Dak}{\Gamma(2-\ak)}\int_{t_k}^{b_k}(\t-t_k)^{1-\ak}
\frac{\partial x}{\partial t_k}[\t]_k\left[\frac{1}{1-\ak}-\ln(\t-t_k)\right]d\t
\end{multline*}
and
\begin{equation*}
\PRCI x(\overline t)=\PRCII x(\overline t)
+\frac{\Dak\Psi(1-\ak)}{\Gamma(1-\ak)}
\int_{t_k}^{b_k}(\t-t_k)^{-\ak}[x[\t]_k-x[b_k]_k]d\t.
\end{equation*}
\end{Theorem}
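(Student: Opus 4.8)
The plan is to transcribe, essentially word for word, the one–variable arguments behind \eqref{eq1} and \eqref{eq2} (and their right–sided analogues), with two purely notational changes: the ordinary derivative $\frac{d}{dt}$ becomes the partial derivative $\frac{\partial}{\partial t_k}$, taken with all other coordinates $t_1,\dots,t_{k-1},t_{k+1},\dots,t_m$ frozen, and the scalar argument $x(\t)$ becomes $x[\t]_k$. Thus one may regard $\t\mapsto x[\t]_k$ as a function of a single real variable and simply reuse the earlier computations.

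For the first formula \eqref{relation}, I would start from the definition of $\PLCI x(\overline t)$ and integrate by parts in $\t$ inside the integral, exactly as for \eqref{eq1}. The boundary term at $\t=a_k$ vanishes because $x[a_k]_k-x[a_k]_k=0$, and the one at $\t=t_k$ vanishes because $(t_k-\t)^{1-\ak}\to 0$ there (this uses $\ak<1$); this yields
$$
\PLCI x(\overline t)=\frac{1}{\Gamma(1-\ak)}\frac{\partial}{\partial t_k}\left[\frac{1}{1-\ak}\int_{a_k}^{t_k}(t_k-\t)^{1-\ak}\frac{\partial x}{\partial t_k}[\t]_k\,d\t\right].
$$
Next I would differentiate under the integral sign in $t_k$. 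The term coming from differentiating the upper limit, namely $(t_k-t_k)^{1-\ak}\frac{\partial x}{\partial t_k}(\overline t)$, vanishes since $1-\ak>0$; the remaining contributions are one $\Dak$–term from differentiating the prefactor $\frac{1}{1-\ak}$ and, via the chain rule applied to $(t_k-\t)^{1-\ak(t_k)}=\exp\!\left[(1-\ak)\ln(t_k-\t)\right]$, the identity $\frac{\partial}{\partial t_k}(t_k-\t)^{1-\ak}=(t_k-\t)^{1-\ak}\left[-\Dak\ln(t_k-\t)+\frac{1-\ak}{t_k-\t}\right]$. Using $\Gamma(2-\ak)=(1-\ak)\Gamma(1-\ak)$, the piece carrying $\frac{1-\ak}{t_k-\t}$ collapses exactly to $\PLCIII x(\overline t)$, while the other two pieces assemble into the stated integral with the bracket $\left[\frac{1}{1-\ak}-\ln(t_k-\t)\right]$, giving \eqref{relation}.

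Formula \eqref{relation3} is just a Leibniz–rule computation: writing both $\PLCI$ and $\PLCII$ in terms of the same inner integral $I(\overline t):=\int_{a_k}^{t_k}(t_k-\t)^{-\ak}\left(x[\t]_k-x[a_k]_k\right)d\t$, one has $\PLCII x(\overline t)-\PLCI x(\overline t)=\left[\frac{\partial}{\partial t_k}\frac{1}{\Gamma(1-\ak)}\right]I(\overline t)$, and the chain rule together with the definition $\Psi=\Gamma'/\Gamma$ gives $\frac{\partial}{\partial t_k}\frac{1}{\Gamma(1-\ak)}=\frac{\Dak\,\Psi(1-\ak)}{\Gamma(1-\ak)}$, which rearranges to \eqref{relation3}. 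The two right–sided identities follow from the same two computations applied to $\PRCI$, $\PRCII$, $\PRCIII$; the only bookkeeping is the sign flip produced jointly by the lower limit of integration and by $\frac{\partial}{\partial t_k}(\t-t_k)^{1-\ak}=(\t-t_k)^{1-\ak}\left[-\Dak\ln(\t-t_k)-\frac{1-\ak}{\t-t_k}\right]$, which turns the correction terms into the stated $+$ signs. The only step that is not entirely mechanical is the interchange of $\frac{\partial}{\partial t_k}$ with the $\t$–integral together with the handling of the integrable singularity at $\t=t_k$; this is legitimate whenever $x$ is continuously differentiable in the $k$-th variable on $\prod_{i=1}^m[a_i,b_i]$, which we assume throughout, the integrands and their $t_k$–partials being then dominated by integrable functions near $\t=t_k$.
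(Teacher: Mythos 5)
Your proposal is correct and follows exactly the route the paper intends: the paper omits the proof of this theorem, stating only that it is proven ``similarly as done before,'' i.e.\ by repeating the one-variable argument for \eqref{eq1} and \eqref{eq2} (integration by parts, then differentiation under the integral with the chain rule applied to $(t_k-\t)^{1-\ak}$ and to $1/\Gamma(1-\ak)$), which is precisely what you do. Your sign bookkeeping for the right-sided operators and your remark on the justification of differentiating under the integral are both sound.
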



\section{Approximation of variable order Caputo derivatives}
\label{sec:theorems}

Let $p\in\mathbb{N}$. We define
\begin{equation*}
\begin{split}
A_p &=\DS \frac{1}{\Gamma(p+1-\ak)}\left[1+\sum_{l=n-p+1}^N
\frac{\Gamma(\ak-n+l)}{\Gamma(\ak-p)(l-n+p)!}  \right],\\
B_p &=  \DS\frac{\Gamma(\ak-n+p)}{\Gamma(1-\ak)\Gamma(\ak)(p-n)!},\\
V_p(\overline t) &= \DS\int_{a_k}^{t_k}(\t-a_k)^{p-n}\frac{\partial x}{\partial t_k}[\t]_kd\t,\\
L_{p}(\overline t) &=\DS \max_{\t\in[a_k,t_k]}\left| \frac{\partial^{p} x}{\partial t_k^{p}}[\t]_k \right|.
\end{split}
\end{equation*}

\begin{Theorem}
\label{teo1}
Let $x\in C^{n+1}\left(\prod_{i=1}^m[a_i,b_i],\mathbb{R}\right)$ with $n\in\mathbb{N}$.
Then, for all $k\in\{1,\ldots,m\}$ and for all $N \in \mathbb{N}$ such that $N \geq n$, we have
$$
\PLCIII x(\overline t) =\DS\sum_{p=1}^{n}A_p (t_k-a_k)^{p-\ak}
\frac{\partial^p x}{\partial t_k^p}[t_k]_k \DS
+\sum_{p=n}^N B_p (t_k-a_k)^{n-p-\ak} V_p(\overline t)+E(\overline t).
$$
The approximation error $E(\overline t)$ is bounded by
$$
E(\overline t)\leq L_{n+1}(\overline t)
\frac{\exp((n-\ak)^2+n-\ak)}{\Gamma(n+1-\ak)N^{n-\ak}(n-\ak)}(t_k-a_k)^{n+1-\ak}.
$$
\end{Theorem}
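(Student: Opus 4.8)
The plan rests on one simple observation: although the differentiation order $\alpha_k$ is a function, at a fixed point $\overline t$ its value $\ak=\alpha_k(t_k)$ is a \emph{constant} relative to the integration variable $\t$. Hence $\PLCIII x(\overline t)=\frac{1}{\Gamma(1-\ak)}\int_{a_k}^{t_k}(t_k-\t)^{-\ak}\frac{\partial x}{\partial t_k}[\t]_k\,d\t$ is nothing but the classical constant-order Caputo derivative of the one-variable function $\t\mapsto x[\t]_k$ evaluated at $t_k$; there is no outer $\partial/\partial t_k$ here, so the variability of the order plays no role. Thus the whole statement reduces to the constant-order expansion formula, and it suffices to run the (by now standard) argument with $\alpha\equiv\ak$ held fixed.

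The first step is to expand the kernel about the lower endpoint: writing $(t_k-\t)^{-\ak}=(t_k-a_k)^{-\ak}\bigl(1-\tfrac{\t-a_k}{t_k-a_k}\bigr)^{-\ak}$ and using the binomial series $(1-z)^{-\ak}=\sum_{m\ge0}\frac{\Gamma(\ak+m)}{\Gamma(\ak)\,m!}\,z^m$ (valid for $0\le z<1$), I substitute into the integral, interchange sum and integral, and re-index $p=m+n$; this produces the exact identity
$$
\PLCIII x(\overline t)=\sum_{p=n}^{\infty}B_p\,(t_k-a_k)^{n-p-\ak}\,V_p(\overline t).
$$
Truncating this series at $p=N$ yields at once the $B_p$-part of the formula. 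It remains to recast the tail $T=\sum_{p>N}B_p(t_k-a_k)^{n-p-\ak}V_p(\overline t)$. For that I write, inside $T$, Taylor's formula for $\frac{\partial x}{\partial t_k}[\t]_k$ about $\t=t_k$ to order $n-1$ with integral remainder $r_n(\t)$, so that $|r_n(\t)|\le \frac{L_{n+1}(\overline t)}{n!}(t_k-\t)^n$. The monomials $(\t-t_k)^j$ integrated against $(\t-a_k)^{p-n}$ become Beta functions; summing the resulting series over $p>N$ I use the vanishing identity $\sum_{r\ge0}\frac{\Gamma(\beta+r)}{\Gamma(\beta)\,r!}=0$ for $\beta<0$ to turn each infinite tail into a finite sum, and the Euler reflection formula in the form $\frac{(-1)^{p}}{\Gamma(\ak)\Gamma(1-\ak)}=\frac{1}{\Gamma(p+1-\ak)\Gamma(\ak-p)}$ then collapses the coefficients to precisely the numbers $A_p$ defined before the theorem. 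This gives the $A_p$-part, leaving
$$
E(\overline t)=\sum_{p>N}B_p\,(t_k-a_k)^{n-p-\ak}\int_{a_k}^{t_k}(\t-a_k)^{p-n}\,r_n(\t)\,d\t.
$$

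For the error bound I estimate $|r_n|$ as above, bound the integral $\int_{a_k}^{t_k}(\t-a_k)^{p-n}(t_k-\t)^n\,d\t$ crudely by $(t_k-a_k)^{p+1}B(p-n+1,n+1)\le (t_k-a_k)^{p+1}(p-n)!\,n!/p!$, and sum the remaining $\Gamma$-ratio series in closed form --- again a Beta integral --- which reduces everything to a ratio $\Gamma(N+1-(n-\ak))/\Gamma(N+1)$. An elementary Stirling-type inequality for this ratio, deliberately not sharp, produces the advertised factors $\exp((n-\ak)^2+n-\ak)$, $N^{-(n-\ak)}$, $(n-\ak)^{-1}$ and $(t_k-a_k)^{n+1-\ak}$.

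The two places that need real care are: (i) justifying the term-by-term integration of the binomial series up to the singular endpoint $\t=t_k$ --- I would carry it out on $[a_k,t_k-\varepsilon]$ and let $\varepsilon\to0^+$, using $\ak<1$ (so that $(t_k-\t)^{-\ak}$ is integrable) together with a bound on the series tail that is uniform in $\varepsilon$; and (ii) the gamma-function bookkeeping that makes the tail of the $B_p$-series collapse exactly onto the $A_p$-terms, where the reflection identity above is the linchpin. Once those are settled, the error estimate is a routine, if slightly tedious, computation.
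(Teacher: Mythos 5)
Your proposal is correct in outline, but it organizes the computation genuinely differently from the paper. The paper first integrates by parts $n$ times (producing a Taylor-type expansion at $a_k$ plus a remainder integral with kernel $(t_k-\t)^{n-\ak}$), then expands $\left(1-\frac{\t-a_k}{t_k-a_k}\right)^{n-\ak}$ binomially and truncates at $N$ --- so its error is the tail of that convergent, alternating expansion integrated against $\frac{\partial^{n+1}x}{\partial t_k^{n+1}}$ --- and finally integrates by parts ``back up'' to convert the moment integrals of the $(n+1)$-st derivative into the boundary terms $A_p\frac{\partial^p x}{\partial t_k^p}[t_k]_k$ and the integrals $V_p$. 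You instead expand $(1-z)^{-\ak}$ at the outset, obtaining the exact identity $\PLCIII x(\overline t)=\sum_{p\ge n}B_p(t_k-a_k)^{n-p-\ak}V_p(\overline t)$ (a clean structural fact the paper never states), truncate at $N$, and recast the tail by Taylor expansion at $t_k$; your coefficients then arise as the tails $-(-1)^p\sum_{l>N}\Gamma(\ak-n+l)/\bigl(\Gamma(\ak)\Gamma(1-\ak)(l-n+p)!\bigr)$, which the vanishing identity together with reflection (note $\Gamma(\ak-p)\Gamma(p+1-\ak)=(-1)^p\Gamma(\ak)\Gamma(1-\ak)$) convert into exactly the finite expressions $A_p$ of the paper --- I checked this bookkeeping and it is right. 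What each route buys: yours isolates the error as a single absolutely convergent tail and, keeping the factor $\frac{1}{l+1}$ coming from $V_l$, actually gives the sharper decay $O(N^{-(n+1-\ak)})$, from which the stated bound follows a fortiori using the same estimate $\left|\C\right|\le\exp((n-\ak)^2+n-\ak)/p^{n+1-\ak}$ that the paper employs; the paper's route never touches a series that diverges at the endpoint $\t=t_k$, so no $\varepsilon$-limiting argument is needed. Two small cautions: the term-by-term integration is most cleanly justified by monotone or dominated convergence (the binomial coefficients of $(1-z)^{-\ak}$ are all positive and $(t_k-\t)^{-\ak}$ is integrable since $\ak<1$), and the final constant-chasing that turns $\Gamma(N+1-(n-\ak))/\Gamma(N+1)$ into the advertised $\exp((n-\ak)^2+n-\ak)/\bigl(\Gamma(n+1-\ak)N^{n-\ak}(n-\ak)\bigr)$ is asserted rather than carried out; it does go through, but it should be written down.
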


\begin{proof}
By definition,
$$
\PLCIII x(\overline t)=\DS\frac{1}{\Gamma(1-\ak)}
\int_{a_k}^{t_k}(t_k-\t)^{-\ak}\frac{\partial x}{\partial t_k}[\t]_kd\t
$$
and, integrating by parts with $u'(\t)=(t_k-\t)^{-\ak}$
and $v(\t)=\frac{\partial x}{\partial t_k}[\t]_k$, we deduce that
$$
\PLCIII x(\overline t)=\DS\frac{(t_k-a_k)^{1-\ak}}{\Gamma(2-\ak)}
\frac{\partial x}{\partial t_k}[a_k]_k+\frac{1}{\Gamma(2-\ak)}
\int_{a_k}^{t_k}(t_k-\t)^{1-\ak}\frac{\partial^2 x}{\partial t_k^2}[\t]_kd\t.
$$
Integrating again by parts, taking $u'(\t)=(t_k-\t)^{1-\ak}$
and $v(\t)=\frac{\partial^2 x}{\partial t_k^2}[\t]_k$, we get
\begin{multline*}
\PLCIII x(\overline t)
=\DS\frac{(t_k-a_k)^{1-\ak}}{\Gamma(2-\ak)}\frac{\partial x}{\partial t_k}[a_k]_k
+\frac{(t_k-a_k)^{2-\ak}}{\Gamma(3-\ak)}\frac{\partial^2 x}{\partial t_k^2}[a_k]_k\\
\DS +\frac{1}{\Gamma(3-\ak)}\int_{a_k}^{t_k}(t_k-\t)^{2-\ak}
\frac{\partial^3 x}{\partial t_k^3}[\t]_kd\t.
\end{multline*}
Repeating the same procedure $n-2$ more times, we get the expansion formula
\begin{multline*}
\PLCIII x(\overline t) = \DS\sum_{p=1}^n \frac{(t_k-a_k)^{p-\ak}}{\Gamma(p+1-\ak)}
\frac{\partial^p x}{\partial t_k^p}[a_k]_k\\
\DS +\frac{1}{\Gamma(n+1-\ak)}\int_{a_k}^{t_k}(t_k-\t)^{n-\ak}
\frac{\partial^{n+1} x}{\partial t_k^{n+1}}[\t]_kd\t.
\end{multline*}
Using the equalities
\begin{equation*}
\begin{split}
(t_k-\t)^{n-\ak}&=\DS(t_k-a_k)^{n-\ak}\left(
1-\frac{\t-a_k}{t_k-a_k}\right)^{n-\ak}\\
&=\DS(t_k-a_k)^{n-\ak}\left[\sum_{p=0}^N \C (-1)^p
\frac{(\t-a_k)^p}{(t_k-a_k)^p}+\overline E(\overline t)\right]
\end{split}
\end{equation*}
with
$$
\overline E(\overline t)
=\sum_{p=N+1}^\infty \C (-1)^p \frac{(\t-a_k)^p}{(t_k-a_k)^p},
$$
we arrive at
\begin{equation*}
\begin{split}
\PLCIII x(\overline t)
&=\DS\sum_{p=1}^n \frac{(t_k-a_k)^{p-\ak}}{\Gamma(p+1-\ak)}
\frac{\partial^p x}{\partial t_k^p}[a_k]_k\\
& \quad \DS +\frac{(t_k-a_k)^{n-\ak}}{\Gamma(n+1-\ak)}\int_{a_k}^{t_k}
\sum_{p=0}^N \C (-1)^p \frac{(\t-a_k)^p}{(t_k-a_k)^p}
\frac{\partial^{n+1} x}{\partial t_k^{n+1}}[\t]_kd\t+E(\overline t)\\
& =\DS\sum_{p=1}^n \frac{(t_k-a_k)^{p-\ak}}{\Gamma(p+1-\ak)}
\frac{\partial^p x}{\partial t_k^p}[a_k]_k\\
& \quad \DS +\frac{(t_k-a_k)^{n-\ak}}{\Gamma(n+1-\ak)}\sum_{p=0}^N \C
\frac{(-1)^p}{(t_k-a_k)^p}\int_{a_k}^{t_k}(\t-a_k)^p
\frac{\partial^{n+1} x}{\partial t_k^{n+1}}[\t]_kd\t+E(\overline t)
\end{split}
\end{equation*}
with
$$
E(\overline t)=\frac{(t_k-a_k)^{n-\ak}}{\Gamma(n+1-\ak)}\int_{a_k}^{t_k}
\overline E(\overline t)\frac{\partial^{n+1} x}{\partial t_k^{n+1}}[\t]_kd\t.
$$
Now, we split the last sum into $p=0$ and the remaining terms $p=1,\ldots,N$
and integrate by parts with $u(\t)=(\t-a_k)^p$ and
$v'(\t)=\frac{\partial^{n+1} x}{\partial t_k^{n+1}}[\t]_k$.
Observing that
$$
\C(-1)^p=\frac{\Gamma(\ak-n+p)}{\Gamma(\ak-n)p!},
$$
we obtain:
\begin{equation*}
\begin{split}
&\frac{(t_k-a_k)^{n-\ak}}{\Gamma(n+1-\ak)}\sum_{p=0}^N \C \frac{(-1)^p}{(t_k-a_k)^p}
\int_{a_k}^{t_k}(\t-a_k)^p\frac{\partial^{n+1} x}{\partial t_k^{n+1}}[\t]_kd\t\\
&=\frac{(t_k-a_k)^{n-\ak}}{\Gamma(n+1-\ak)}\left[
\frac{\partial^n x}{\partial t_k^n}[t_k]_k-\frac{\partial^n x}{\partial t_k^n}[a_k]_k\right]
+\frac{(t_k-a_k)^{n-\ak}}{\Gamma(n+1-\ak)}\sum_{p=1}^N
\frac{\Gamma(\ak-n+p)}{\Gamma(\ak-n)p!(t_k-a_k)^{p}}\\
&\quad \times\left[(t_k-a_k)^p \frac{\partial^n x}{\partial t_k^n}[t_k]_k
-\int_{a_k}^{t_k}p(\t-a_k)^{p-1}\frac{\partial^n x}{\partial t_k^n}[\t]_kd\t\right]
\end{split}
\end{equation*}
\begin{equation*}
\begin{split}
&=-\frac{(t_k-a_k)^{n-\ak}}{\Gamma(n+1-\ak)}\frac{\partial^n x}{\partial t_k^n}[a_k]_k
+\frac{(t_k-a_k)^{n-\ak}}{\Gamma(n+1-\ak)}\frac{\partial^n x}{\partial t_k^n}[t_k]_k\left[
1+\sum_{p=1}^N \frac{\Gamma(\ak-n+p)}{\Gamma(\ak-n)p!}\right]\\
& \quad +\frac{(t_k-a_k)^{n-\ak-1}}{\Gamma(n-\ak)}\sum_{p=1}^N
\frac{\Gamma(\ak-n+p)}{\Gamma(\ak+1-n)(p-1)!(t_k-a_k)^{p-1}}
\int_{a_k}^{t_k}(\t-a_k)^{p-1}\frac{\partial^{n} x}{\partial t_k^{n}}[\t]_kd\t.
\end{split}
\end{equation*}
Thus, we get
\begin{equation*}
\begin{split}
\PLCIII x(\overline t)& =\DS\sum_{p=1}^{n-1}
\frac{(t_k-a_k)^{p-\ak}}{\Gamma(p+1-\ak)}\frac{\partial^p x}{\partial t_k^p}[a_k]_k\\
& \qquad \DS +\frac{(t_k-a_k)^{n-\ak}}{\Gamma(n+1-\ak)}
\frac{\partial^n x}{\partial t_k^n}[t_k]_k\left[1+\sum_{p=1}^N
\frac{\Gamma(\ak-n+p)}{\Gamma(\ak-n)p!}  \right]\\
& \qquad \DS +\frac{(t_k-a_k)^{n-\ak-1}}{\Gamma(n-\ak)}\sum_{p=1}^N
\frac{\Gamma(\ak-n+p)}{\Gamma(\ak+1-n)(p-1)!(t_k-a_k)^{p-1}} \\
& \qquad \DS\times\int_{a_k}^{t_k}(\t-a_k)^{p-1}
\frac{\partial^{n} x}{\partial t_k^{n}}[\t]_kd\t+E(\overline t).
\end{split}
\end{equation*}
Repeating the process $n-1$ more times with respect to the last sum, that is,
splitting the first term of the sum and integrating by parts the obtained result,
we arrive to
\begin{equation*}
\begin{split}
\PLCIII x(\overline t)& =\DS\sum_{p=1}^{n}
\frac{(t_k-a_k)^{p-\ak}}{\Gamma(p+1-\ak)}\frac{\partial^p x}{\partial t_k^p}[t_k]_k\left[
1+\sum_{l=n-p+1}^N \frac{\Gamma(\ak-n+l)}{\Gamma(\ak-p)(l-n+p)!}  \right]\\
& \quad \DS +\sum_{p=n}^N \frac{\Gamma(\ak-n+p)}{\Gamma(1-\ak)\Gamma(\ak)(p-n)!}(t_k-a_k)^{n-p-\ak} \\
& \quad \DS\times\int_{a_k}^{t_k}(\t-a_k)^{p-n}\frac{\partial x}{\partial t_k}[\t]_kd\t+E(\overline t).
\end{split}
\end{equation*}
We now seek the upper bound formula for $E(\overline t)$.
Using the two relations
$$
\left|  \frac{\t-a_k}{t_k-a_k}\right|\leq 1,
\, \mbox{ if } \, \t\in[a_k,t_k]
\quad \mbox{ and } \quad
\left| \C \right|\leq \frac{\exp((n-\ak)^2+n-\ak)}{p^{n+1-\ak}},
$$
we get
\begin{equation*}
\begin{split}
\overline E(\overline t)& \leq\DS \sum_{p=N+1}^\infty
\frac{\exp((n-\ak)^2+n-\ak)}{p^{n+1-\ak}} \\
& \DS\leq \int_N^\infty \frac{\exp((n-\ak)^2+n-\ak)}{p^{n+1-\ak}}\, dp
=\frac{\exp((n-\ak)^2+n-\ak)}{N^{n-\ak}(n-\ak)}.
\end{split}
\end{equation*}
Then,
$$
E(\overline t)\leq L_{n+1}(\overline t)
\frac{\exp((n-\ak)^2+n-\ak)}{\Gamma(n+1-\ak)N^{n-\ak}(n-\ak)}(t_k-a_k)^{n+1-\ak}.
$$
This concludes the proof.
\end{proof}

\begin{Remark}
In Theorem~\ref{teo1} we have
$$
\lim_{N\to\infty}E(\overline t)=0
$$
for all $\overline t\in \prod_{i=1}^m[a_i,b_i]$ and $n\in\mathbb{N}$.
\end{Remark}

\begin{Theorem}
\label{teo2}
Let $x\in C^{n+1}\left(\prod_{i=1}^m[a_i,b_i],\mathbb{R}\right)$ with $n\in\mathbb{N}$.
Then, for all $k\in\{1,\ldots,m\}$ and for all $N \in \mathbb{N}$ such that $N \geq n$, we have
\begin{multline*}
\PLCI x(\overline t) =\DS\sum_{p=1}^{n}A_p (t_k-a_k)^{p-\ak}
\frac{\partial^p x}{\partial t_k^p}[t_k]_k \DS
+\sum_{p=n}^N B_p (t_k-a_k)^{n-p-\ak} V_p(\overline t)\\
+\frac{\Dak(t_k-a_k)^{1-\ak}}{\Gamma(2-\ak)}\left[\left(\frac{1}{1-\ak}
-\ln(t_k-a_k)\right)\sum_{p=0}^N\D\frac{(-1)^p}{(t_k-a_k)^{p}} V_{n+p}(\overline t)\right.\\
\left.+\sum_{p=0}^N\D(-1)^p\sum_{r=1}^N\frac{1}{r(t_k-a_k)^{p+r}}
V_{n+p+r}(\overline t)\right]+E(\overline t).
\end{multline*}
The approximation error $E(\overline t)$ is bounded by
\begin{equation*}
\begin{split}
E(\overline t)& \leq\DS L_{n+1}(\overline t)
\frac{\exp((n-\ak)^2+n-\ak)}{\Gamma(n+1-\ak)N^{n-\ak}(n-\ak)}(t_k-a_k)^{n+1-\ak}\\
&\quad \DS+\left|\Dak\right|L_1(\overline t)
\frac{{\exp((1-\ak)^2+1-\ak)}}{\Gamma(2-\ak)N^{1-\ak}(1-\ak)}\\
&\quad \DS \times \left[\left|\frac{1}{1-\ak}
-\ln(t_k-a_k)\right|+\frac{1}{N}\right](t_k-a_k)^{2-\ak}.
\end{split}
\end{equation*}
\end{Theorem}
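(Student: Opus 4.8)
The plan is to derive the expansion for $\PLCI x(\overline t)$ from the one already established for $\PLCIII x(\overline t)$ in Theorem~\ref{teo1}, combined with the exact identity~\eqref{relation}. Since~\eqref{relation} reads
$$
\PLCI x(\overline t)=\PLCIII x(\overline t)+\frac{\Dak}{\Gamma(2-\ak)}\int_{a_k}^{t_k}(t_k-\t)^{1-\ak}\frac{\partial x}{\partial t_k}[\t]_k\left[\frac{1}{1-\ak}-\ln(t_k-\t)\right]d\t,
$$
I would substitute for the first term on the right its Theorem~\ref{teo1} expansion, which already produces the two sums $\DS\sum_{p=1}^{n}A_p(t_k-a_k)^{p-\ak}\frac{\partial^p x}{\partial t_k^p}[t_k]_k$ and $\DS\sum_{p=n}^{N}B_p(t_k-a_k)^{n-p-\ak}V_p(\overline t)$, as well as the first line of the claimed bound for $E(\overline t)$. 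It then remains to expand the ``correction'' integral in powers of $t_k-a_k$.

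For that, I would set $z=\frac{\t-a_k}{t_k-a_k}\in[0,1]$ and split $\ln(t_k-\t)=\ln(t_k-a_k)+\ln(1-z)$, so that the correction kernel becomes
$$
(t_k-a_k)^{1-\ak}(1-z)^{1-\ak}\left[\left(\frac{1}{1-\ak}-\ln(t_k-a_k)\right)-\ln(1-z)\right].
$$
Then I would use two series: the binomial expansion already exploited in Theorem~\ref{teo1}, $(1-z)^{1-\ak}=\sum_{p=0}^{N}\D(-1)^p z^p+\overline{E}_1$, and the logarithmic series $-\ln(1-z)=\sum_{r=1}^{N}\frac{z^r}{r}+\overline{E}_2$. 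Substituting both, multiplying out the two finite sums, interchanging the finite sums with the integral, and using $\int_{a_k}^{t_k}\left(\frac{\t-a_k}{t_k-a_k}\right)^{q}\frac{\partial x}{\partial t_k}[\t]_k\,d\t=(t_k-a_k)^{-q}V_{n+q}(\overline t)$, the retained terms reproduce exactly the bracketed expression in $V_{n+p}(\overline t)$ and $V_{n+p+r}(\overline t)$ appearing in the statement, multiplied by $\frac{\Dak(t_k-a_k)^{1-\ak}}{\Gamma(2-\ak)}$; everything discarded (all pieces containing $\overline{E}_1$ or $\overline{E}_2$) is absorbed into $E(\overline t)$.

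The algebra above is routine; \textbf{the main obstacle is the error estimate for the correction term}. For the binomial tail one reuses, with $n$ replaced by $1$, the inequality $\left|\D\right|\le\frac{\exp((1-\ak)^2+1-\ak)}{p^{2-\ak}}$, whence $|\overline{E}_1|\le\int_{N}^{\infty}\frac{\exp((1-\ak)^2+1-\ak)}{p^{2-\ak}}\,dp=\frac{\exp((1-\ak)^2+1-\ak)}{N^{1-\ak}(1-\ak)}$ uniformly on $[0,1]$; pairing this with the constant factor $\frac{1}{1-\ak}-\ln(t_k-a_k)$, pulling out $L_1(\overline t)=\max_{\t\in[a_k,t_k]}\left|\frac{\partial x}{\partial t_k}[\t]_k\right|$ and integrating over $[a_k,t_k]$ produces the term carrying $\left|\frac{1}{1-\ak}-\ln(t_k-a_k)\right|$. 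The delicate piece is the logarithmic tail $\overline{E}_2=\sum_{r=N+1}^{\infty}\frac{z^r}{r}$, which is unbounded as $z\to1$: here one exploits the compensating factor $(1-z)^{1-\ak}$ together with $\overline{E}_2\le\frac{z^{N+1}}{(N+1)(1-z)}$, so that $(1-z)^{1-\ak}\overline{E}_2\le\frac{(1-z)^{-\ak}}{N+1}$, and the integrability $\int_{a_k}^{t_k}(1-z)^{-\ak}\,d\t=\frac{t_k-a_k}{1-\ak}$ supplies the extra $\frac1N$ factor. The fiddly point is then to verify that the remaining cross terms do not worsen the order (using $\sum_{r=1}^{N}\frac{z^r}{r}\le-\ln(1-z)$ and the integrability of the logarithm), so that every discarded piece fits under the single clean majorant
$$
|\Dak|\,L_1(\overline t)\,\frac{\exp((1-\ak)^2+1-\ak)}{\Gamma(2-\ak)\,N^{1-\ak}\,(1-\ak)}\left[\left|\frac{1}{1-\ak}-\ln(t_k-a_k)\right|+\frac1N\right](t_k-a_k)^{2-\ak},
$$
which, added to the Theorem~\ref{teo1} error, yields the stated two-line bound.
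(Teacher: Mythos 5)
Your proposal is correct and follows essentially the same route as the paper: substitute the Theorem~\ref{teo1} expansion into relation~\eqref{relation}, expand the correction integral via the binomial series for $(t_k-\t)^{1-\ak}$ and the logarithmic series for $\ln(t_k-\t)$ with tails $\overline{E}_1$, $\overline{E}_2$, and bound the tails using $\left|\D\right|\le\exp((1-\ak)^2+1-\ak)/p^{2-\ak}$ together with a $1/N$ gain for the logarithmic tail. The only (immaterial) differences are that you bound the logarithmic tail pointwise via $\overline{E}_2\le z^{N+1}/((N+1)(1-z))$ while the paper integrates it term by term to get $\sum_{r>N}\frac{1}{r(r+1)}\le\frac1N$, and that you explicitly flag the mixed terms $S_1\overline{E}_2$ and $\overline{E}_1S_2$, which the paper passes over in silence.
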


\begin{proof}
Taking into account relation \eqref{relation} and Theorem~\ref{teo1},
we only need to expand the term
\begin{equation}
\label{relation2}
\frac{\Dak}{\Gamma(2-\ak)}\int_{a_k}^{t_k}(t_k-\t)^{1-\ak}
\frac{\partial x}{\partial t_k}[\t]_k\left[\frac{1}{1-\ak}-\ln(t_k-\t)\right]d\t.
\end{equation}
Splitting the integral, and using the expansion formulas
\begin{equation*}
\begin{split}
(t_k-\t)^{1-\ak}&=\DS(t_k-a_k)^{1-\ak}\left(1-\frac{\t-a_k}{t_k-a_k}\right)^{1-\ak}\\
&=\DS(t_k-a_k)^{1-\ak}\left[\sum_{p=0}^N \D (-1)^p
\frac{(\t-a_k)^p}{(t_k-a_k)^p}+\overline E_1(\overline t)\right]
\end{split}
\end{equation*}
with
$$
\overline E_1(\overline t)
=\sum_{p=N+1}^\infty \D (-1)^p \frac{(\t-a_k)^p}{(t_k-a_k)^p}
$$
and
\begin{equation*}
\begin{split}
\ln(t_k-\t)&=\DS\ln(t_k-a_k)+\ln\left(1-\frac{\t-a_k}{t_k-a_k}\right)\\
&=\DS\ln(t_k-a_k)-\sum_{r=1}^N \frac{1}{r}
\frac{(\t-a_k)^r}{(t_k-a_k)^r}-\overline E_2(\overline t)
\end{split}
\end{equation*}
with
$$
\overline E_2(\overline t)=\sum_{r=N+1}^\infty
\frac{1}{r} \frac{(\t-a_k)^r}{(t_k-a_k)^r},
$$
we conclude that \eqref{relation2} is equivalent to
\begin{equation*}
\begin{split}
&\frac{\Dak}{\Gamma(2-\ak)}\left[\left(\frac{1}{1-\ak}-\ln(t_k-a_k)\right)
\int_{a_k}^{t_k}(t_k-\t)^{1-\ak}\frac{\partial x}{\partial t_k}[\t]_kd\t\right.\\
&\qquad \left. -\int_{a_k}^{t_k}(t_k-\t)^{1-\ak} \ln\left(1-\frac{\t-a_k}{t_k-a_k}\right)
\frac{\partial x}{\partial t_k}[\t]_kd\t\right]\\
&=\frac{\Dak}{\Gamma(2-\ak)}\left[
\left(\frac{1}{1-\ak}-\ln(t_k-a_k)\right)\right.\\
&\qquad \times\int_{a_k}^{t_k}(t_k-a_k)^{1-\ak}\sum_{p=0}^N \D (-1)^p
\frac{(\t-a_k)^p}{(t_k-a_k)^p}\frac{\partial x}{\partial t_k}[\t]_kd\t\\
&\qquad\left. +\int_{a_k}^{t_k}(t_k-a_k)^{1-\ak}
\sum_{p=0}^N \D (-1)^p \frac{(\t-a_k)^p}{(t_k-a_k)^p}\sum_{r=1}^N \frac{1}{r}
\frac{(\t-a_k)^r}{(t_k-a_k)^r}\frac{\partial x}{\partial t_k}[\t]_kd\t  \right]\\
&\qquad+\frac{\Dak}{\Gamma(2-\ak)}\left[\left(\frac{1}{1-\ak}-\ln(t_k-a_k)\right)
\int_{a_k}^{t_k}(t_k-a_k)^{1-\ak}\overline E_1(\overline t)
\frac{\partial x}{\partial t_k}[\t]_kd\t\right.\\
&\qquad\left. +\int_{a_k}^{t_k}(t_k-a_k)^{1-\ak}\overline E_1(\overline t)
\overline E_2(\overline t)\frac{\partial x}{\partial t_k}[\t]_kd\t \right]\\
&=\frac{\Dak(t_k-a_k)^{1-\ak}}{\Gamma(2-\ak)}\left[\left(\frac{1}{1-\ak}
-\ln(t_k-a_k)\right)\sum_{p=0}^N\D\frac{(-1)^p}{(t_k-a_k)^{p}} V_{n+p}(\overline t)\right.\\
&\qquad\left.+\sum_{p=0}^N\D(-1)^p\sum_{r=1}^N\frac{1}{r(t_k-a_k)^{p+r}}
V_{n+p+r}(\overline t)\right]+\frac{\Dak (t_k-a_k)^{1-\ak}}{\Gamma(2-\ak)}\\
&\qquad\times\left[\left(\frac{1}{1-\ak}
-\ln(t_k-a_k)\right)\int_{a_k}^{t_k}\overline E_1(\overline t)
\frac{\partial x}{\partial t_k}[\t]_kd\t
+\int_{a_k}^{t_k}\overline E_1(\overline t)\overline E_2(\overline t)
\frac{\partial x}{\partial t_k}[\t]_kd\t  \right].
\end{split}
\end{equation*}
For the error analysis, we know from Theorem~\ref{teo1} that
$$
\overline E_1(\overline t)
\leq\frac{\exp((1-\ak)^2+1-\ak)}{N^{1-\ak}(1-\ak)}.
$$
Then,
\begin{equation}
\label{error1}
\left| \int_{a_k}^{t_k}(t_k-a_k)^{1-\ak}\overline E_1(\overline t)
\frac{\partial x}{\partial t_k}[\t]_kd\t \right|
\leq L_1(\overline t)\frac{{\exp((1-\ak)^2+1-\ak)}}{N^{1-\ak}(1-\ak)}(t_k-a_k)^{2-\ak}.
\end{equation}
On the other hand, we have
\begin{equation}
\label{error2}
\begin{split}
&\left|\int_{a_k}^{t_k}(t_k-a_k)^{1-\ak}\overline E_1(\overline t)\overline E_2(\overline t)
\frac{\partial x}{\partial t_k}[\t]_kd\t \right|\\
&\leq L_1(\overline t)\frac{{\exp((1-\ak)^2+1-\ak)}}{N^{1-\ak}(1-\ak)}(t_k-a_k)^{1-\ak}
\sum_{r=N+1}^\infty\frac{1}{r(t_k-a_k)^r}\int_{a_k}^{t_k}(\t-a_k)^rd\t\\
&= L_1(\overline t)\frac{{\exp((1-\ak)^2+1-\ak)}}{N^{1-\ak}(1-\ak)}(t_k-a_k)^{1-\ak}
\sum_{r=N+1}^\infty\frac{t_k-a_k}{r(r+1)}\\
&\leq L_1(\overline t)\frac{{\exp((1-\ak)^2+1-\ak)}}{N^{2-\ak}(1-\ak)}(t_k-a_k)^{2-\ak}.
\end{split}
\end{equation}
We get the desired result by combining inequalities \eqref{error1} and \eqref{error2}.
\end{proof}

\begin{Theorem}
\label{teo3}
Let $x\in C^{n+1}(\prod_{i=1}^m[a_i,b_i],\mathbb{R})$ with $n\in\mathbb{N}$. Then,
for all $k\in\{1,\ldots,m\}$ and for all $N \in \mathbb{N}$ such that $N \geq n$, we have
\begin{equation*}
\begin{split}
&\PLCII x(\overline t) =\DS\sum_{p=1}^{n}A_p (t_k-a_k)^{p-\ak}
\frac{\partial^p x}{\partial t_k^p}[t_k]_k \DS
+\sum_{p=n}^N B_p (t_k-a_k)^{n-p-\ak} V_p(\overline t)\\
&\qquad +\frac{\Dak(t_k-a_k)^{1-\ak}}{\Gamma(2-\ak)}\left[\left(\Psi(2-\ak)-\ln(t_k-a_k)\right)
\sum_{p=0}^N\D\frac{(-1)^p}{(t_k-a_k)^{p}} V_{n+p}(\overline t)\right.\\
&\qquad \left.+\sum_{p=0}^N\D(-1)^p\sum_{r=1}^N\frac{1}{r(t_k-a_k)^{p+r}}
V_{n+p+r}(\overline t)\right]+E(\overline t).
\end{split}
\end{equation*}
The approximation error $E(\overline t)$ is bounded by
\begin{equation*}
\begin{split}
E(\overline t)&\leq\DS L_{n+1}(\overline t)
\frac{\exp((n-\ak)^2+n-\ak)}{\Gamma(n+1-\ak)N^{n-\ak}(n-\ak)}(t_k-a_k)^{n+1-\ak}\\
& \quad \DS+\left|\Dak\right|L_1(\overline t)
\frac{{\exp((1-\ak)^2+1-\ak)}}{\Gamma(2-\ak)N^{1-\ak}(1-\ak)}\\
& \quad \DS\times \left[\left|\Psi(2-\ak)-\ln(t_k-a_k)\right|
+\frac{1}{N}\right](t_k-a_k)^{2-\ak}.
\end{split}
\end{equation*}
\end{Theorem}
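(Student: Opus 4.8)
The plan is to mimic the proof of Theorem~\ref{teo2}, replacing relation~\eqref{relation} with relation~\eqref{relation3}. Since Theorem~\ref{teo1} already gives an expansion for $\PLCIII x(\overline t)$ with its error bound, and \eqref{relation3} expresses $\PLCII x(\overline t)$ as $\PLCI x(\overline t)$ plus a correction, while \eqref{relation} expresses $\PLCI x(\overline t)$ as $\PLCIII x(\overline t)$ plus a correction, the cleanest route is to combine the two to write
$$
\PLCII x(\overline t)=\PLCIII x(\overline t)+\frac{\Dak}{\Gamma(2-\ak)}\int_{a_k}^{t_k}(t_k-\t)^{1-\ak}\frac{\partial x}{\partial t_k}[\t]_k\left[\frac{1}{1-\ak}-\ln(t_k-\t)\right]d\t+\frac{\Dak\Psi(1-\ak)}{\Gamma(1-\ak)}\int_{a_k}^{t_k}(t_k-\t)^{-\ak}[x[\t]_k-x[a_k]_k]d\t.
$$
The first integral is exactly the term \eqref{relation2} already expanded in the proof of Theorem~\ref{teo2}, and the second integral must be expanded afresh.

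First I would record that the $\PLCIII$ part contributes the sum $\sum_{p=1}^n A_p(t_k-a_k)^{p-\ak}\frac{\partial^p x}{\partial t_k^p}[t_k]_k+\sum_{p=n}^N B_p(t_k-a_k)^{n-p-\ak}V_p(\overline t)$ together with the error term bounded by $L_{n+1}(\overline t)\exp((n-\ak)^2+n-\ak)(t_k-a_k)^{n+1-\ak}/(\Gamma(n+1-\ak)N^{n-\ak}(n-\ak))$, by Theorem~\ref{teo1}. Next I would carry over verbatim the expansion of \eqref{relation2}, which produces the bracketed expression with $\frac{1}{1-\ak}-\ln(t_k-a_k)$ in front of $\sum_{p=0}^N\D\frac{(-1)^p}{(t_k-a_k)^p}V_{n+p}(\overline t)$ plus the double sum, with the two error contributions \eqref{error1} and \eqref{error2}. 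The new work is the third integral: integrating by parts (with $u'(\t)=(t_k-\t)^{-\ak}$, $v(\t)=x[\t]_k-x[a_k]_k$) turns $\frac{\Dak\Psi(1-\ak)}{\Gamma(1-\ak)}\int_{a_k}^{t_k}(t_k-\t)^{-\ak}[x[\t]_k-x[a_k]_k]d\t$ into $\frac{\Dak\Psi(1-\ak)}{\Gamma(2-\ak)}\int_{a_k}^{t_k}(t_k-\t)^{1-\ak}\frac{\partial x}{\partial t_k}[\t]_kd\t$, which is precisely the $r$-absent, $\frac{1}{1-\ak}$-absent piece of \eqref{relation2} with coefficient $\Psi(1-\ak)$ instead of $\frac{1}{1-\ak}$. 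Expanding $(t_k-\t)^{1-\ak}$ as before gives $\frac{\Dak\Psi(1-\ak)(t_k-a_k)^{1-\ak}}{\Gamma(2-\ak)}\sum_{p=0}^N\D\frac{(-1)^p}{(t_k-a_k)^p}V_{n+p}(\overline t)$ plus an error controlled exactly like \eqref{error1}.

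Then I would merge: the coefficient in front of $\sum_{p=0}^N\D\frac{(-1)^p}{(t_k-a_k)^p}V_{n+p}(\overline t)$ becomes $\frac{1}{1-\ak}-\ln(t_k-a_k)+\Psi(1-\ak)$, and here I would invoke the identity $\Psi(2-\ak)=\Psi(1-\ak)+\frac{1}{1-\ak}$ (from $\Gamma(2-\ak)=(1-\ak)\Gamma(1-\ak)$, hence $\Psi(2-\ak)=\Psi(1-\ak)+\frac{1}{1-\ak}$), which collapses the coefficient to $\Psi(2-\ak)-\ln(t_k-a_k)$, matching the statement. The double-sum term is unchanged. For the error bound, the $\PLCIII$ contribution is unchanged, and the remaining contributions from \eqref{error1}, \eqref{error2}, and the new integral all carry the factor $|\Dak|L_1(\overline t)\exp((1-\ak)^2+1-\ak)(t_k-a_k)^{2-\ak}/(\Gamma(2-\ak)N^{1-\ak}(1-\ak))$; collecting the $\ln$-type factor and the $1/N$ from \eqref{error2} yields $[|\Psi(2-\ak)-\ln(t_k-a_k)|+\frac{1}{N}]$, which is the claimed bound. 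The one genuinely delicate point is the bookkeeping of the $\Psi(1-\ak)$-weighted term and the use of the Psi recurrence to recognize $\Psi(2-\ak)$; everything else is a transcription of the proof of Theorem~\ref{teo2}, so I would state it briefly rather than repeat all the estimates.
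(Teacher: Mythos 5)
Your proposal is correct and follows essentially the same route as the paper: the paper's proof also uses relation \eqref{relation3}, integrates by parts to turn the correction term into $\frac{\Dak\Psi(1-\ak)}{\Gamma(2-\ak)}\int_{a_k}^{t_k}(t_k-\t)^{1-\ak}\frac{\partial x}{\partial t_k}[\t]_k\,d\t$, and then refers to the machinery of Theorem~\ref{teo2}, which implicitly requires exactly the merging of $\frac{1}{1-\ak}+\Psi(1-\ak)$ into $\Psi(2-\ak)$ that you make explicit. Your write-up simply fills in the details the paper compresses into ``the rest of the proof is similar.''
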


\begin{proof}
Starting with relation \eqref{relation3},
and integrating by parts the integral, we obtain that
$$
\PLCII x(\overline t)=\PLCI x(\overline t)+\frac{\Dak\Psi(1-\ak)}{\Gamma(2-\ak)}
\int_{a_k}^{t_k}(t_k-\t)^{1-\ak}\frac{\partial x}{\partial t_k}[\t]_k d\t.
$$
The rest of the proof is similar to the one of Theorem~\ref{teo2}.
\end{proof}

\begin{Remark}
As particular cases of Theorems~\ref{teo1}, \ref{teo2} and \ref{teo3},
we obtain expansion formulas for $\LCI x(t)$, $\LCII x(t)$ and $\LCIII x(t)$.
\end{Remark}

With respect to the three right fractional operators of Definition~\ref{def:8},
we set, for $p\in\mathbb{N}$,
\begin{equation*}
\begin{split}
C_p &=\DS \frac{(-1)^{p}}{\Gamma(p+1-\ak)}\left[1+\sum_{l=n-p+1}^N
\frac{\Gamma(\ak-n+l)}{\Gamma(\ak-p)(l-n+p)!}  \right],\\
D_p &= \DS\frac{-\Gamma(\ak-n+p)}{\Gamma(1-\ak)\Gamma(\ak)(p-n)!},\\
W_p(\overline t)
&= \DS\int_{t_k}^{b_k}(b_k-\t)^{p-n}\frac{\partial x}{\partial t_k}[\t]_kd\t,\\
M_{p}(\overline t)
&=\DS \max_{\t\in[t_k,b_k]}\left| \frac{\partial^{p} x}{\partial t_k^{p}}[\t]_k \right|.
\end{split}
\end{equation*}
The expansion formulas are given in Theorems~\ref{thm:15}, \ref{thm:16} and \ref{thm:17}.
We omit the proofs since they are similar to the corresponding left ones.

\begin{Theorem}
\label{thm:15}
Let $x\in C^{n+1}\left(\prod_{i=1}^m[a_i,b_i],\mathbb{R}\right)$ with $n\in\mathbb{N}$.
Then, for all $k\in\{1,\ldots,m\}$ and for all $N \in \mathbb{N}$ such that $N \geq n$, we have
$$
\PRCIII x(\overline t) =\DS\sum_{p=1}^{n}C_p (b_k-t_k)^{p-\ak}
\frac{\partial^p x}{\partial t_k^p}[t_k]_k
\DS +\sum_{p=n}^N D_p (b_k-t_k)^{n-p-\ak} W_p(\overline t)+E(\overline t).
$$
The approximation error $E(\overline t)$ is bounded by
$$
E(\overline t)\leq M_{n+1}(\overline t)
\frac{\exp((n-\ak)^2+n-\ak)}{\Gamma(n+1-\ak)N^{n-\ak}(n-\ak)}(b_k-t_k)^{n+1-\ak}.
$$
\end{Theorem}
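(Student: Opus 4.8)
The plan is to replay the proof of Theorem~\ref{teo1}, making the obvious replacements: the lower endpoint $a_k$ by the upper endpoint $b_k$, the kernel $(t_k-\t)$ by $(\t-t_k)$, and the weight $(\t-a_k)$ by $(b_k-\t)$. Starting from the definition
\[
\PRCIII x(\overline t)=\frac{-1}{\Gamma(1-\ak)}\int_{t_k}^{b_k}(\t-t_k)^{-\ak}\frac{\partial x}{\partial t_k}[\t]_k\,d\t ,
\]
I would integrate by parts $n$ times, at each step differentiating $v=\partial^{p+1}x/\partial t_k^{p+1}[\t]_k$ against an antiderivative of $(\t-t_k)^{p-\ak}$. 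Because $\partial_\t(\t-t_k)^{\beta}$ carries no extra minus sign (unlike $\partial_\t(t_k-\t)^{\beta}$), each integration by parts flips the sign of the surviving integral; moreover the boundary contributions are now taken at $\t=b_k$, the factor $(\t-t_k)^{p+1-\ak}$ vanishing at $\t=t_k$. An induction then yields
\[
\PRCIII x(\overline t)=\sum_{p=1}^{n}\frac{(-1)^p(b_k-t_k)^{p-\ak}}{\Gamma(p+1-\ak)}\frac{\partial^p x}{\partial t_k^p}[b_k]_k+\frac{(-1)^{n+1}}{\Gamma(n+1-\ak)}\int_{t_k}^{b_k}(\t-t_k)^{n-\ak}\frac{\partial^{n+1}x}{\partial t_k^{n+1}}[\t]_k\,d\t .
\]

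Next I would write $(\t-t_k)^{n-\ak}=(b_k-t_k)^{n-\ak}\bigl(1-\tfrac{b_k-\t}{b_k-t_k}\bigr)^{n-\ak}$, expand the last factor as a binomial series, truncate it after the term $p=N$, and collect the tail into a remainder $\overline E(\overline t)$, precisely as in Theorem~\ref{teo1}. Using $\C(-1)^p=\Gamma(\ak-n+p)/(\Gamma(\ak-n)\,p!)$, this reduces the problem to finitely many integrals $\int_{t_k}^{b_k}(b_k-\t)^p\,\partial^{n+1}x/\partial t_k^{n+1}[\t]_k\,d\t$, plus the error $E(\overline t)=\frac{(-1)^{n+1}(b_k-t_k)^{n-\ak}}{\Gamma(n+1-\ak)}\int_{t_k}^{b_k}\overline E(\overline t)\,\partial^{n+1}x/\partial t_k^{n+1}[\t]_k\,d\t$. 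I would then integrate each of those integrals by parts $n$ more times, at every step peeling off the $p=0$ boundary term and lowering the order of differentiation; the accumulated boundary terms produce the coefficients of $\partial^p x/\partial t_k^p[t_k]_k$, and the last surviving integrals are exactly the $W_p(\overline t)$. Rearranging the resulting double sums in the same way as in the left-sided proof gives the stated formula with constants $C_p$ and $D_p$.

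For the error estimate the argument is verbatim: on $[t_k,b_k]$ one has $0\le\frac{b_k-\t}{b_k-t_k}\le1$, so from $|\C|\le\exp((n-\ak)^2+n-\ak)/p^{n+1-\ak}$ and comparison with $\int_N^\infty p^{-(n+1-\ak)}\,dp$ one obtains $|\overline E(\overline t)|\le\exp((n-\ak)^2+n-\ak)/\bigl(N^{n-\ak}(n-\ak)\bigr)$; bounding $|\partial^{n+1}x/\partial t_k^{n+1}|$ by $M_{n+1}(\overline t)$ and using $\int_{t_k}^{b_k}d\t=b_k-t_k$ then gives the claimed bound on $E(\overline t)$.

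The one genuine difficulty is keeping track of signs: the leading $-1$ in the definition, the sign-free differentiation of $(\t-t_k)^{\beta}$ versus the sign-carrying differentiation of $(b_k-\t)^{\beta}$, and the alternating factors $(-1)^p$ from the binomial must combine to produce exactly the $(-1)^p$ hidden in $C_p=(-1)^pA_p$ and the single sign change $D_p=-B_p$. A clean way to bypass this bookkeeping is to observe that the substitution $\t\mapsto a_k+b_k-\t$, together with the replacement of $\alpha_k$ by $s\mapsto\alpha_k(a_k+b_k-s)$, turns $\PRCIII x(\overline t)$ into the type III left Caputo derivative of $\tilde x[\sigma]_k:=x[a_k+b_k-\sigma]_k$ evaluated at $a_k+b_k-t_k$; Theorem~\ref{teo1} applies to $\tilde x$ directly, and since $\partial^p\tilde x/\partial\sigma^p=(-1)^p\,\partial^p x/\partial t_k^p$, $(a_k+b_k-t_k)-a_k=b_k-t_k$, and the $V_p$ built from $\tilde x$ equals $-W_p(\overline t)$, one reads off $C_p=(-1)^pA_p$, $D_p=-B_p$, $M_{n+1}$ in place of $L_{n+1}$, and the error bound with $(b_k-t_k)$ replacing $(t_k-a_k)$.
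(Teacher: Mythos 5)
Your proposal is correct and is exactly what the paper intends: the paper omits this proof with the remark that it is ``similar to the corresponding left one,'' and your step-by-step mirroring of the proof of Theorem~\ref{teo1} (with $(\t-t_k)$ and $(b_k-\t)$ in place of $(t_k-\t)$ and $(\t-a_k)$, boundary terms now evaluated at $\t=b_k$, and the alternating signs correctly assembling into $C_p=(-1)^pA_p$ and $D_p=-B_p$) is that omitted argument. The closing reflection $\t\mapsto a_k+b_k-\t$ is a nice independent check that pins down the signs, $M_{n+1}$ in place of $L_{n+1}$, and the factor $(b_k-t_k)$ in the error bound without redoing the bookkeeping.
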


\begin{Theorem}
\label{thm:16}
Let $x\in C^{n+1}\left(\prod_{i=1}^m[a_i,b_i],\mathbb{R}\right)$ with $n\in\mathbb{N}$.
Then, for all $k\in\{1,\ldots,m\}$ and for all $N \in \mathbb{N}$ such that $N \geq n$, we have
\begin{equation*}
\begin{split}
&\PRCI x(\overline t) =\DS\sum_{p=1}^{n}C_p (b_k-t_k)^{p-\ak}
\frac{\partial^p x}{\partial t_k^p}[t_k]_k \DS
+\sum_{p=n}^N D_p (b_k-t_k)^{n-p-\ak} W_p(\overline t)\\
&\qquad +\frac{\Dak(b_k-t_k)^{1-\ak}}{\Gamma(2-\ak)}\left[\left(\frac{1}{1-\ak}
-\ln(b_k-t_k)\right)\sum_{p=0}^N\D\frac{(-1)^p}{(b_k-t_k)^{p}} W_{n+p}(\overline t)\right.\\
&\qquad \left.+\sum_{p=0}^N\D(-1)^p\sum_{r=1}^N\frac{1}{r(b_k-t_k)^{p+r}}
W_{n+p+r}(\overline t)\right]+E(\overline t).
\end{split}
\end{equation*}
The approximation error $E(\overline t)$ is bounded by
\begin{equation*}
\begin{split}
E(\overline t)&\leq M_{n+1}(\overline t)
\frac{\exp((n-\ak)^2+n-\ak)}{\Gamma(n+1-\ak)N^{n-\ak}(n-\ak)}(b_k-t_k)^{n+1-\ak}\\
&\quad \DS +\left|\Dak\right|M_1(\overline t)
\frac{{\exp((1-\ak)^2+1-\ak)}}{\Gamma(2-\ak)N^{1-\ak}(1-\ak)}\\
&\quad \DS \times \left[\left|\frac{1}{1-\ak}-\ln(b_k-t_k)\right|
+\frac{1}{N}\right](b_k-t_k)^{2-\ak}.
\end{split}
\end{equation*}
\end{Theorem}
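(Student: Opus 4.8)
The plan is to mirror the proof of Theorem~\ref{teo2}, replacing every left object by its right counterpart. The starting point is the right analogue of relation~\eqref{relation} (the third of the four formulas established in Section~\ref{sec:2.2}),
$$
\PRCI x(\overline t)=\PRCIII x(\overline t)+\frac{\Dak}{\Gamma(2-\ak)}\int_{t_k}^{b_k}(\t-t_k)^{1-\ak}\frac{\partial x}{\partial t_k}[\t]_k\left[\frac{1}{1-\ak}-\ln(\t-t_k)\right]d\t.
$$
The first summand is expanded by Theorem~\ref{thm:15}, which already contributes the terms with coefficients $C_p$ and $D_p$ together with the first line of the error bound, namely $M_{n+1}(\overline t)\,\exp((n-\ak)^2+n-\ak)/[\Gamma(n+1-\ak)N^{n-\ak}(n-\ak)]\,(b_k-t_k)^{n+1-\ak}$. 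It therefore remains only to expand the correction integral and to estimate the error it produces.

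For the correction integral I would use the two expansions appearing in the proof of Theorem~\ref{teo2}, now centred at $b_k$ rather than $a_k$: writing $(\t-t_k)^{1-\ak}=(b_k-t_k)^{1-\ak}\bigl(1-\tfrac{b_k-\t}{b_k-t_k}\bigr)^{1-\ak}$ and $\ln(\t-t_k)=\ln(b_k-t_k)+\ln\bigl(1-\tfrac{b_k-\t}{b_k-t_k}\bigr)$, expanding the first factor by the generalized binomial series with remainder $\overline E_1(\overline t)$ and the logarithm by its Taylor series with remainder $\overline E_2(\overline t)$, then substituting, splitting off the $\bigl(\tfrac{1}{1-\ak}-\ln(b_k-t_k)\bigr)$ part, multiplying out the two finite sums, and moving the sums outside the integral. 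The surviving integrals are $\int_{t_k}^{b_k}(b_k-\t)^{\,j}\frac{\partial x}{\partial t_k}[\t]_kd\t$, which by the definition of $W_p(\overline t)$ equal $W_{n+j}(\overline t)$; this produces exactly the bracketed expression with $W_{n+p}(\overline t)$ and $W_{n+p+r}(\overline t)$ in the statement. No further integration by parts is needed at this stage, since the only derivative appearing in the correction term is $\frac{\partial x}{\partial t_k}$.

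The error $E(\overline t)$ is then the sum of the error inherited from Theorem~\ref{thm:15} and the two remainder contributions coming from $\overline E_1(\overline t)$ alone and from the product $\overline E_1(\overline t)\overline E_2(\overline t)$. These are bounded exactly as in \eqref{error1} and \eqref{error2}, with $L_1(\overline t)$ replaced by $M_1(\overline t)$, $t_k-a_k$ by $b_k-t_k$, and $\ln(t_k-a_k)$ by $\ln(b_k-t_k)$: the bound on $\overline E_1$ from Theorem~\ref{teo1} applied with $n=1$, together with the telescoping estimate $\sum_{r\ge N+1}1/[r(r+1)]\le 1/N$, yields the claimed two extra lines. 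The main obstacle is purely one of bookkeeping: one must check that the index shifts introduced by the substitution $\t\mapsto b_k-\t$ and by the definition of $W_p$ line up so that the right-endpoint formula carries precisely the same index ranges as the left one, and that the factor $(-1)^p$ in $C_p$, the extra sign in $D_p$, and the overall minus sign of the right Riemann--Liouville operator combine correctly with the signs produced by the boundary terms at $t_k$ and $b_k$ in the successive integrations by parts hidden inside Theorem~\ref{thm:15}. Once the cases $n=1,2$ are verified by hand, the inductive step is identical to the left case, so the full argument goes through.
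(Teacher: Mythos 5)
Your proposal is correct and matches the paper's intent exactly: the paper omits this proof precisely because it is the mirror image of the proof of Theorem~\ref{teo2}, which is what you carry out, starting from the right-sided analogue of relation~\eqref{relation}, invoking Theorem~\ref{thm:15} for the $\PRCIII$ part, expanding $(\t-t_k)^{1-\ak}$ and $\ln(\t-t_k)$ about $b_k$ so that the surviving integrals become $W_{n+p}(\overline t)$ and $W_{n+p+r}(\overline t)$, and reusing the estimates of type \eqref{error1}--\eqref{error2} with $M_1$, $b_k-t_k$ in place of $L_1$, $t_k-a_k$. No substantive difference from the paper's route.
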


\begin{Theorem}
\label{thm:17}
Let $x\in C^{n+1}\left(\prod_{i=1}^m[a_i,b_i],\mathbb{R}\right)$ with $n\in\mathbb{N}$.
Then, for all $k\in\{1,\ldots,m\}$ and for all $N \in \mathbb{N}$ such that $N \geq n$, we have
\begin{equation*}
\begin{split}
&\PRCI x(\overline t) =\DS\sum_{p=1}^{n}C_p (b_k-t_k)^{p-\ak}
\frac{\partial^p x}{\partial t_k^p}[t_k]_k \DS
+\sum_{p=n}^N D_p (b_k-t_k)^{n-p-\ak} W_p(\overline t)\\
&\qquad +\frac{\Dak(b_k-t_k)^{1-\ak}}{\Gamma(2-\ak)}\left[\left(\Psi(2-\ak)
-\ln(b_k-t_k)\right)\sum_{p=0}^N\D\frac{(-1)^p}{(b_k-t_k)^{p}}
W_{n+p}(\overline t)\right.\\
&\qquad \left.+\sum_{p=0}^N\D(-1)^p\sum_{r=1}^N\frac{1}{r(b_k-t_k)^{p+r}}
W_{n+p+r}(\overline t)\right]+E(\overline t).
\end{split}
\end{equation*}
The approximation error $E(\overline t)$ is bounded by
\begin{equation*}
\begin{split}
E(\overline t)&\leq M_{n+1}(\overline t)
\frac{\exp((n-\ak)^2+n-\ak)}{\Gamma(n+1-\ak)N^{n-\ak}(n-\ak)}(b_k-t_k)^{n+1-\ak}\\
&\quad \DS +\left|\Dak\right|M_1(\overline t)\frac{{\exp((1-\ak)^2+1-\ak)}}{\Gamma(2-\ak)N^{1-\ak}(1-\ak)}\\
&\quad \DS \times \left[\left|\Psi(2-\ak)-\ln(b_k-t_k)\right|+\frac{1}{N}\right](b_k-t_k)^{2-\ak}.
\end{split}
\end{equation*}
\end{Theorem}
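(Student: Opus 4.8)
The plan is to follow the proof of Theorem~\ref{teo3} with the obvious changes, replacing the left operators by their right counterparts, Theorem~\ref{teo2} by Theorem~\ref{thm:16}, and relation \eqref{relation3} by the last of the four identities relating the partial right Caputo operators, namely
$$
\PRCI x(\overline t)=\PRCII x(\overline t)+\frac{\Dak\Psi(1-\ak)}{\Gamma(1-\ak)}
\int_{t_k}^{b_k}(\t-t_k)^{-\ak}\bigl(x[\t]_k-x[b_k]_k\bigr)\,d\t .
$$
First I would integrate the integral on the right-hand side by parts, using the antiderivative $(\t-t_k)^{1-\ak}/(1-\ak)$ of $(\t-t_k)^{-\ak}$; both boundary contributions vanish (the lower one because $(\t-t_k)^{1-\ak}$ does, the upper one because $x[\t]_k-x[b_k]_k$ does), so the identity becomes
$$
\PRCII x(\overline t)=\PRCI x(\overline t)+\frac{\Dak\Psi(1-\ak)}{\Gamma(2-\ak)}
\int_{t_k}^{b_k}(\t-t_k)^{1-\ak}\frac{\partial x}{\partial t_k}[\t]_k\,d\t .
$$

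Next I would insert the expansion of $\PRCI x(\overline t)$ furnished by Theorem~\ref{thm:16} and expand the remaining integral exactly as the integral \eqref{relation2} was expanded in the proof of Theorem~\ref{teo2}: write $(\t-t_k)^{1-\ak}=(b_k-t_k)^{1-\ak}\bigl(1-\tfrac{b_k-\t}{b_k-t_k}\bigr)^{1-\ak}$, develop the binomial as $\sum_{p=0}^N\D(-1)^p(b_k-\t)^p/(b_k-t_k)^p$ plus a tail $\overline E_1(\overline t)$, and identify the resulting integrals with the $W_{n+p}(\overline t)$. Adding the sum so obtained to the $\bigl(\tfrac{1}{1-\ak}-\ln(b_k-t_k)\bigr)$-sum already present in Theorem~\ref{thm:16} gives the coefficient $\Psi(1-\ak)+\tfrac{1}{1-\ak}-\ln(b_k-t_k)$, and the key simplification is the recurrence $\Psi(z+1)=\Psi(z)+1/z$ evaluated at $z=1-\ak$, which collapses it to $\Psi(2-\ak)-\ln(b_k-t_k)$; the nested double sum $\sum_p\sum_r$ carries over unchanged. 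This produces the displayed formula.

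Finally, the error term is the sum of the error of Theorem~\ref{thm:16} and the contribution of the tail $\overline E_1(\overline t)$ of the extra binomial expansion. The latter is estimated precisely as $\overline E_1$ was in Theorems~\ref{teo1} and~\ref{teo2}, yielding a bound of the form $|\Dak|\,M_1(\overline t)\,\frac{\exp((1-\ak)^2+1-\ak)}{\Gamma(2-\ak)N^{1-\ak}(1-\ak)}(b_k-t_k)^{2-\ak}$; since $\Psi(1-\ak)$ has been absorbed into $\Psi(2-\ak)$, combining it with the matching piece of the bound of Theorem~\ref{thm:16} gives exactly the factor $|\Psi(2-\ak)-\ln(b_k-t_k)|+1/N$ in the statement. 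I do not expect a genuine obstacle here: the only points requiring care are verifying that the boundary terms in the first integration by parts vanish, performing the $\Psi$-recurrence correctly, and keeping the index shifts in the nested sums straight — all identical to the left-sided computations already carried out in the proof of Theorem~\ref{teo3}.
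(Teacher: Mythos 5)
Your proposal is correct and is precisely the argument the paper intends: the authors omit the proof of Theorem~\ref{thm:17}, saying only that it is similar to the corresponding left-sided one, i.e.\ to the proof of Theorem~\ref{teo3}, and your steps (integration by parts of the $\Psi(1-\ak)$ relation with vanishing boundary terms, insertion of the expansion from Theorem~\ref{thm:16}, binomial expansion of $(\t-t_k)^{1-\ak}$ in powers of $(b_k-\t)/(b_k-t_k)$, and the recurrence $\Psi(2-\ak)=\Psi(1-\ak)+\tfrac{1}{1-\ak}$ together with the matching error bookkeeping) are exactly that adaptation. One remark: what your derivation actually establishes is the displayed right-hand side as an expansion of $\PRCII x(\overline t)$ rather than of $\PRCI x(\overline t)$; the left-hand side printed in the theorem is evidently a typo (otherwise it would contradict Theorem~\ref{thm:16} whenever $\Dak\neq 0$), and your proof correctly targets the intended operator.
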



\section{An example}
\label{sec:EX}

To test the accuracy of the proposed method, we compare the fractional derivative
of a concrete given function with some numerical approximations of it.
For $t\in[0,1]$, let $x(t)=t^2$ be the test function. For the order
of the fractional derivatives we consider two cases:
$$
\a=\frac{50t+49}{100} \quad \mbox{and} \quad \beta(t)=\frac{t+5}{10}.
$$
We consider the approximations given in Theorems~\ref{teo1}, \ref{teo2} and \ref{teo3},
with a fixed $n=1$ and $N\in\{2,4,6\}$. The error of approximating
$f(t)$ by $\tilde{f}(t)$ is measured by $|f(t)-\tilde{f}(t)|$.
See Figures~\ref{IntExp2}--\ref{IntExp7}.
\begin{figure}[!ht]
\begin{center}
\subfigure[${^C_0\mathbb{D}_t^{\a}} x(t)$]{\includegraphics[scale=0.3]{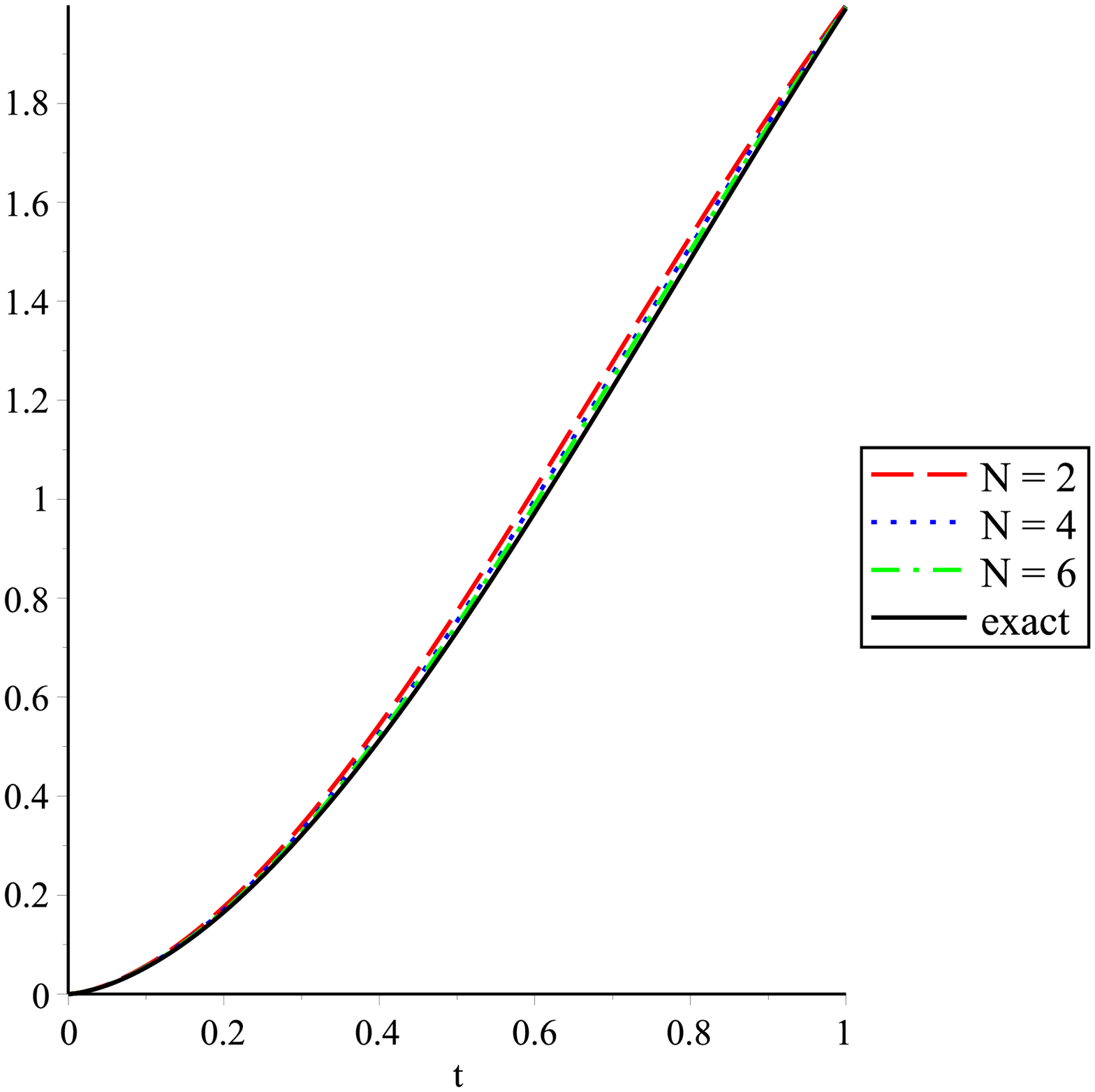}}\hspace{1cm}
\subfigure[Error]{\includegraphics[scale=0.3]{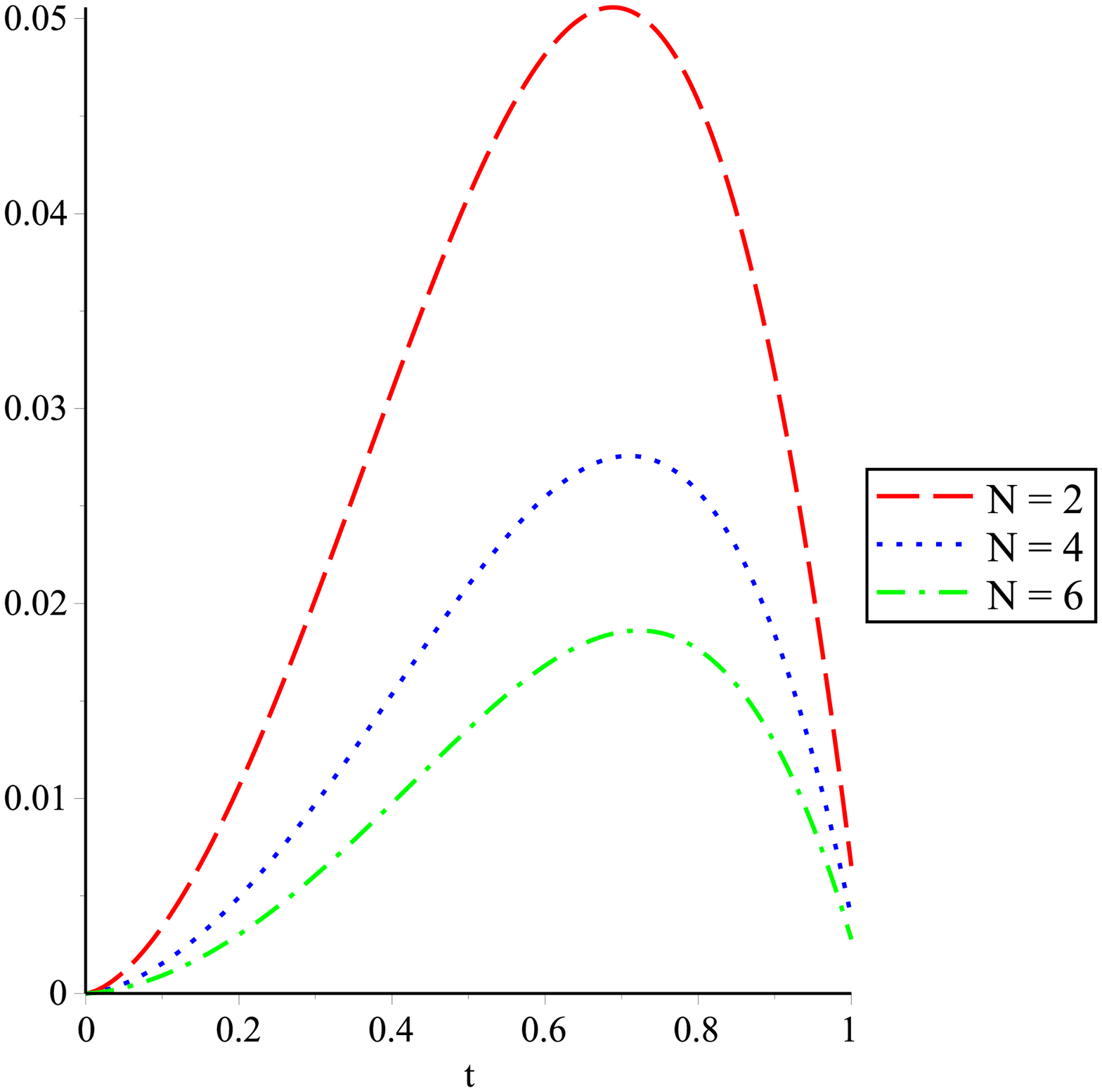}}
\end{center}
\caption{Type III left Caputo derivative of order $\a$
for the example of Section~\ref{sec:EX}---analytic
versus numerical approximations obtained from Theorem~\ref{teo1}.}
\label{IntExp2}
\end{figure}
\begin{figure}[!ht]
\begin{center}
\subfigure[${^C_0D_t^{\a}} x(t)$]{\includegraphics[scale=0.3]{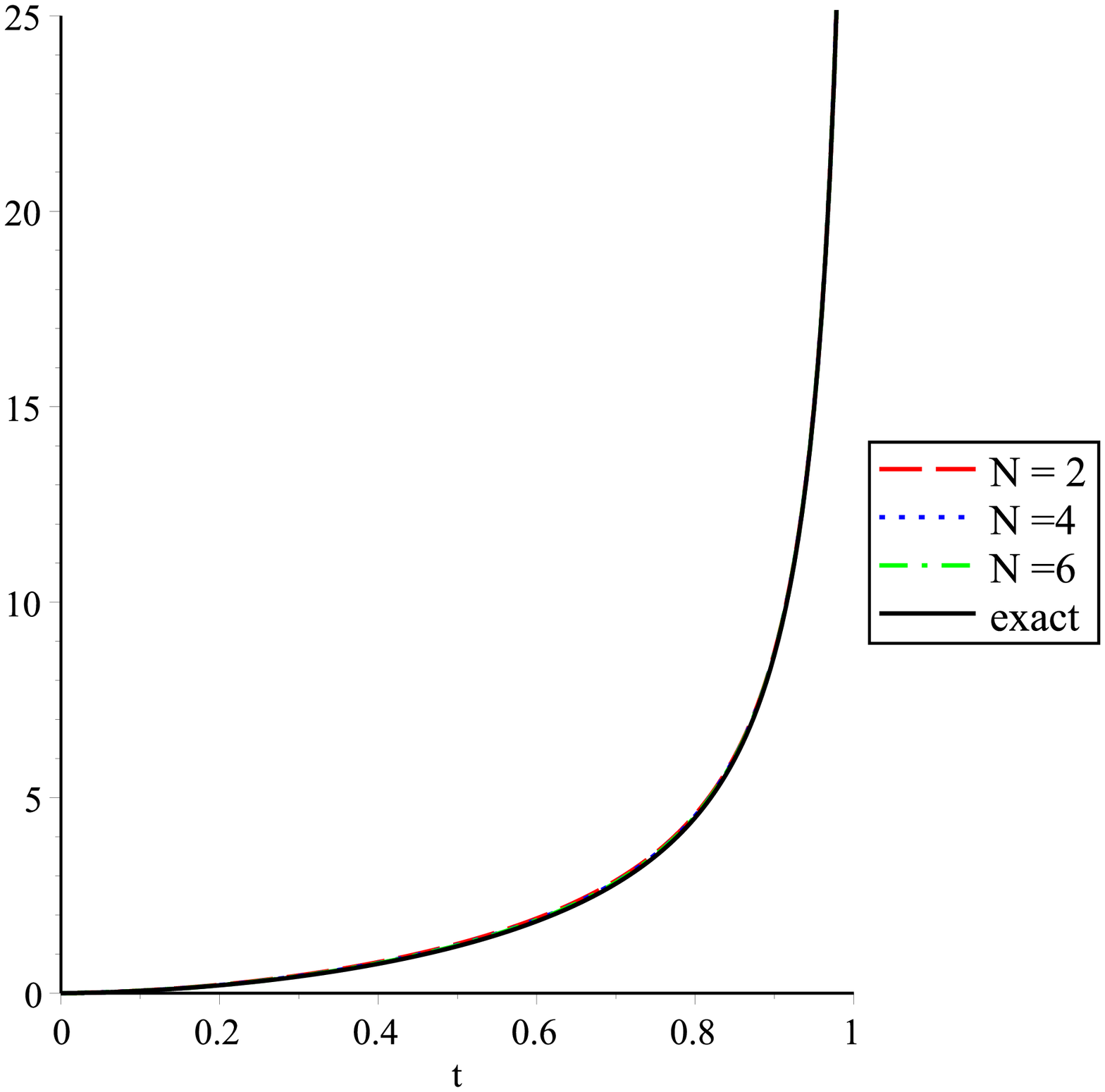}}\hspace{1cm}
\subfigure[Error]{\includegraphics[scale=0.3]{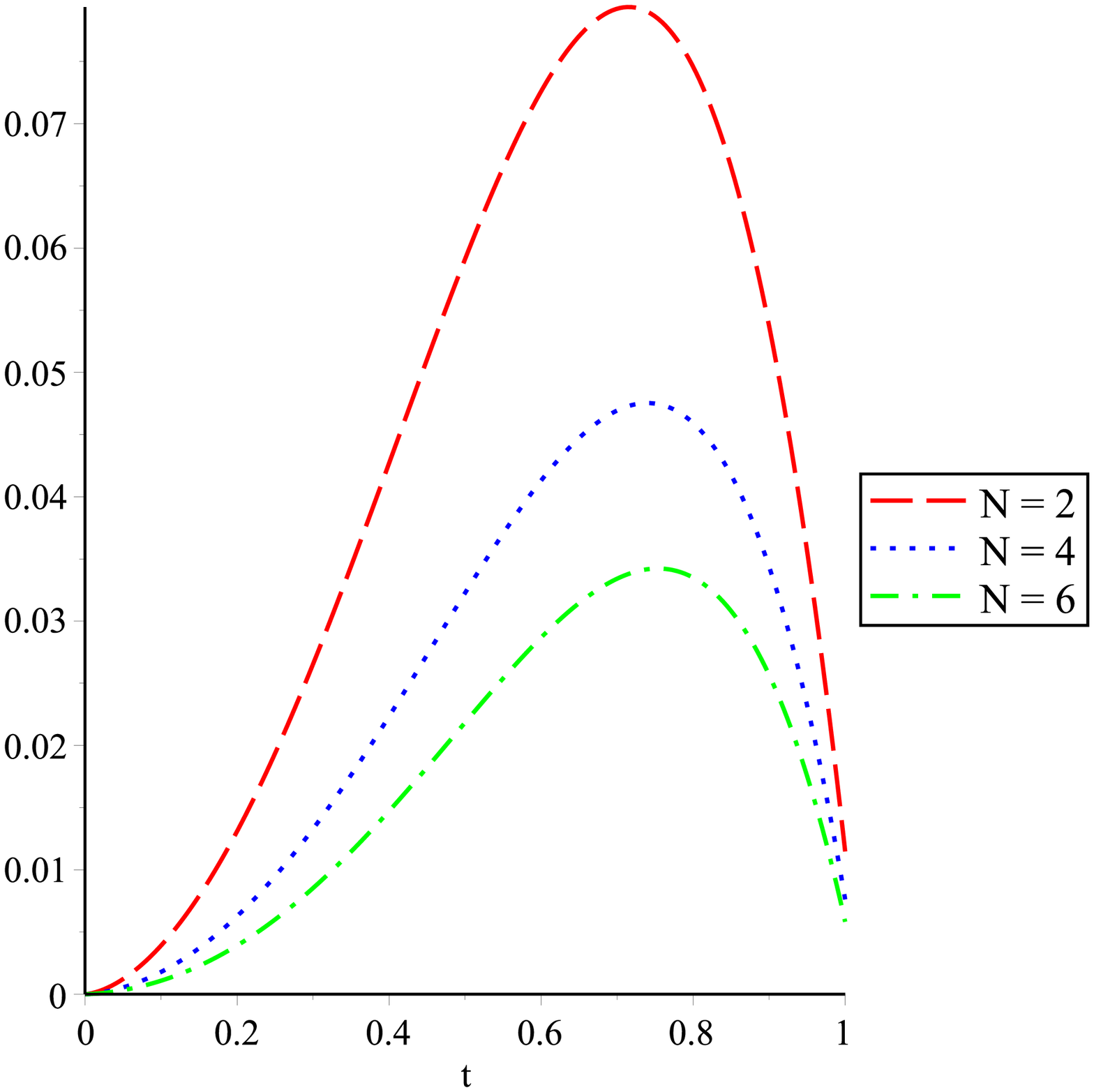}}
\end{center}
\caption{Type I left Caputo derivative of order $\a$
for the example of Section~\ref{sec:EX}---analytic
versus numerical approximations obtained from Theorem~\ref{teo2}.}
\label{IntExp3}
\end{figure}
\begin{figure}[!ht]
\begin{center}
\subfigure[${^C_0\mathcal{D}_t^{\a}} x(t)$]{\includegraphics[scale=0.3]{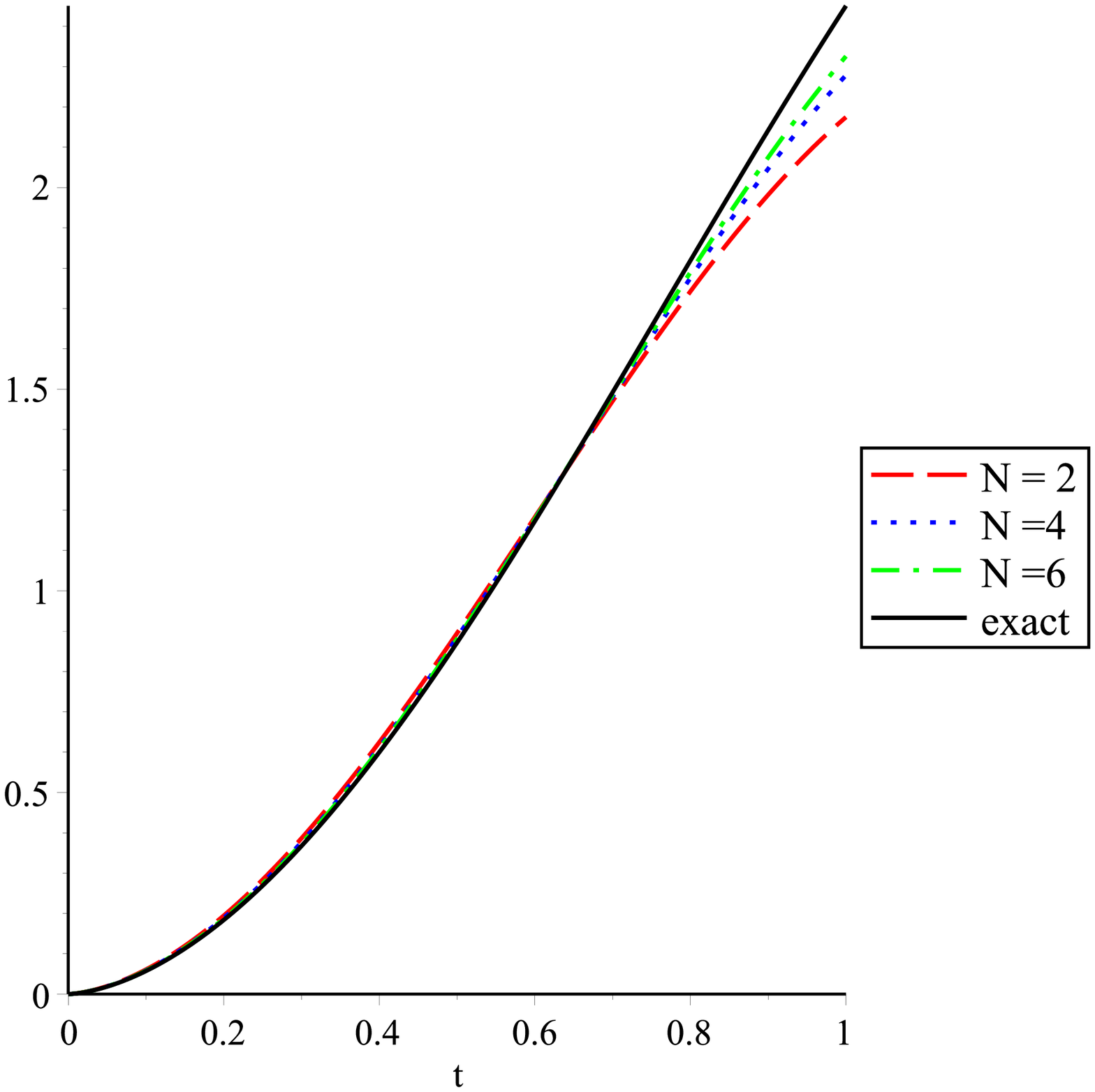}} \hspace{1cm}
\subfigure[Error]{\includegraphics[scale=0.3]{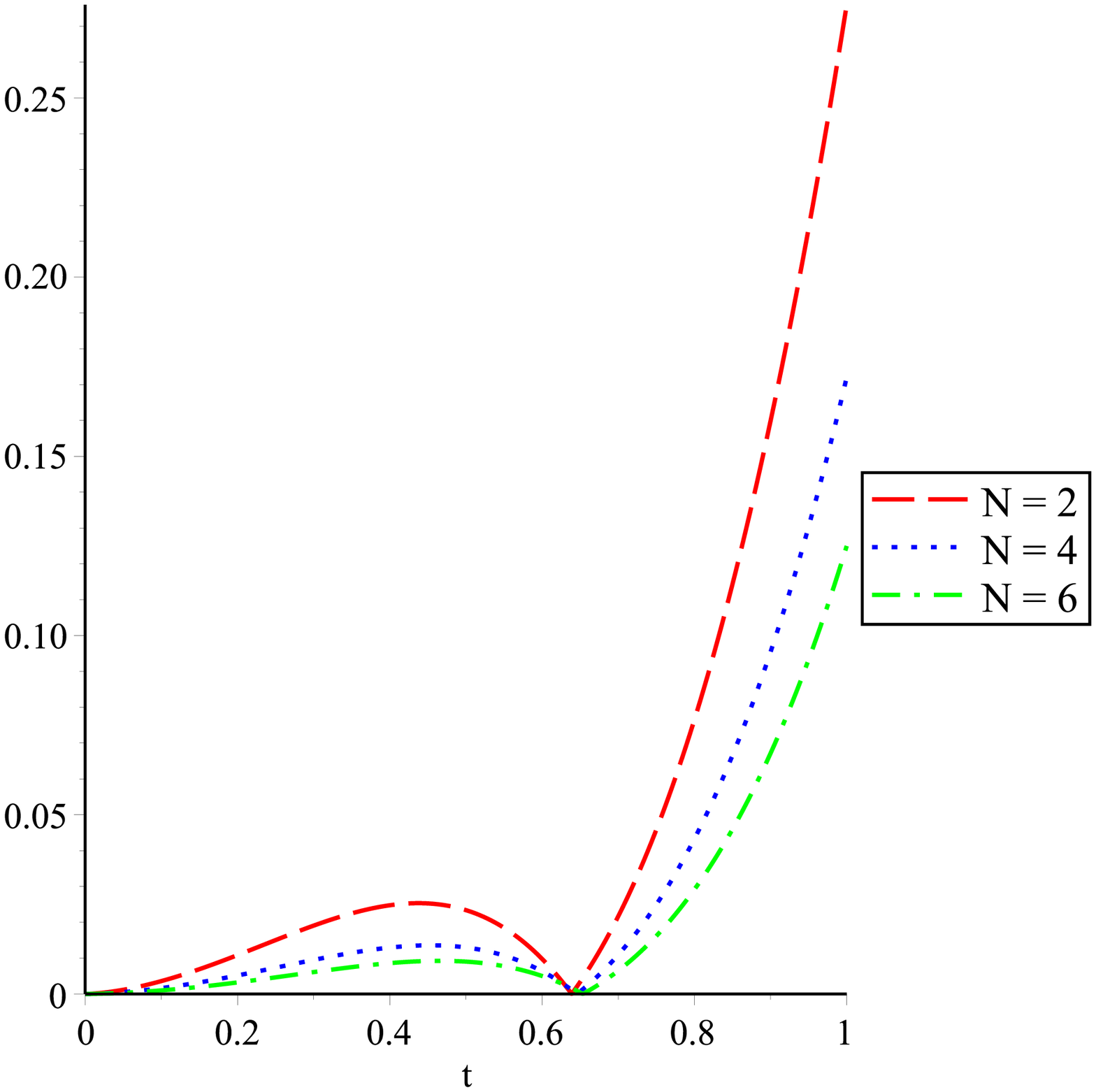}}
\end{center}
\caption{Type II left Caputo derivative of order $\a$
for the example of Section~\ref{sec:EX}---analytic
versus numerical approximations obtained from Theorem~\ref{teo3}.}
\label{IntExp4}
\end{figure}
\begin{figure}[!ht]
\begin{center}
\subfigure[${^C_0\mathbb{D}_t^{\beta(t)}} x(t)$]{\includegraphics[scale=0.3]{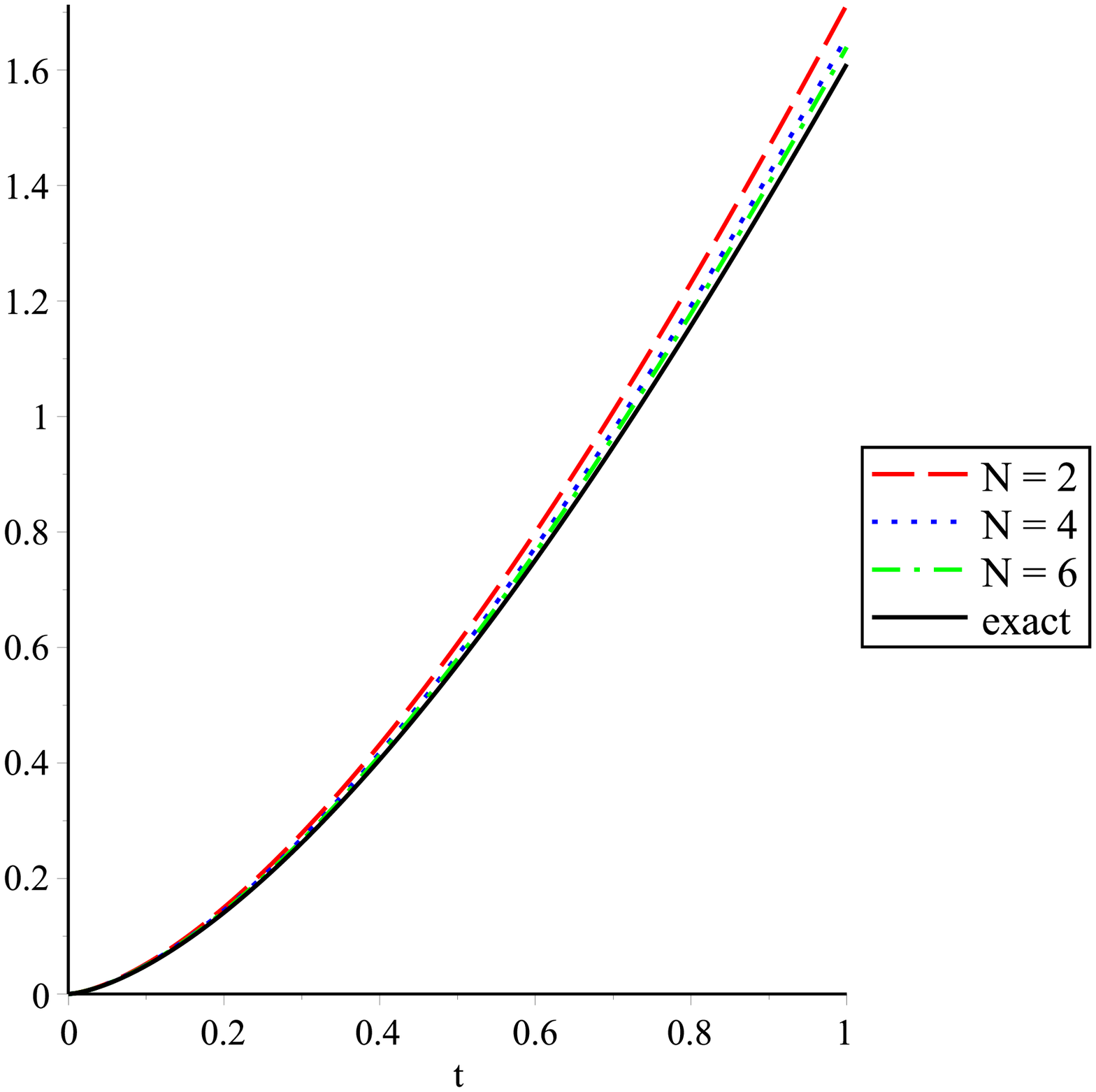}}\hspace{1cm}
\subfigure[Error]{\includegraphics[scale=0.3]{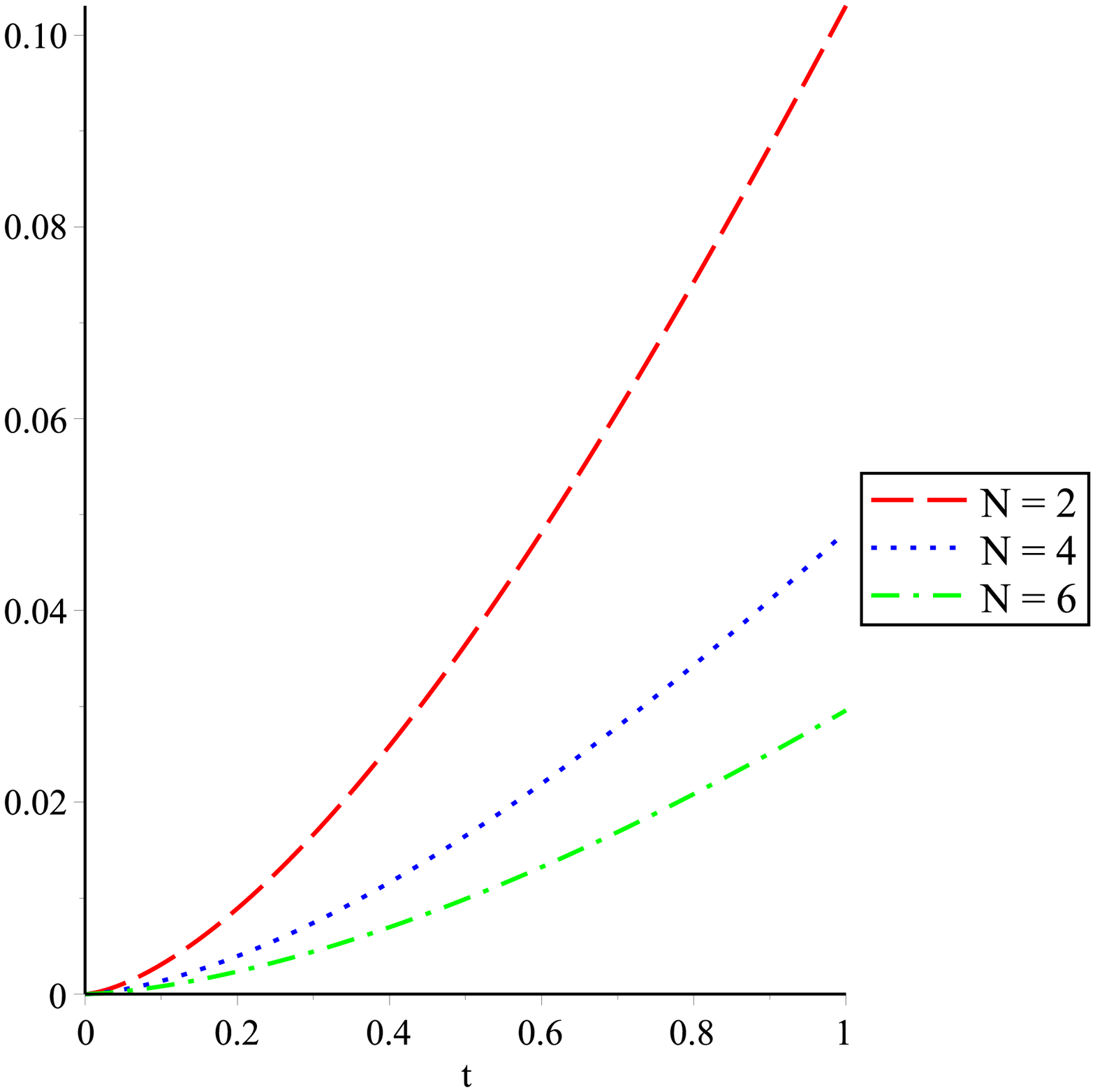}}
\end{center}
\caption{Type III left Caputo derivative of order $\beta(t)$
for the example of Section~\ref{sec:EX}---analytic
versus numerical approximations obtained from Theorem~\ref{teo1}.}
\label{IntExp5}
\end{figure}
\begin{figure}[!ht]
\begin{center}
\subfigure[${^C_0D_t^{\beta(t)}} x(t)$]{\includegraphics[scale=0.3]{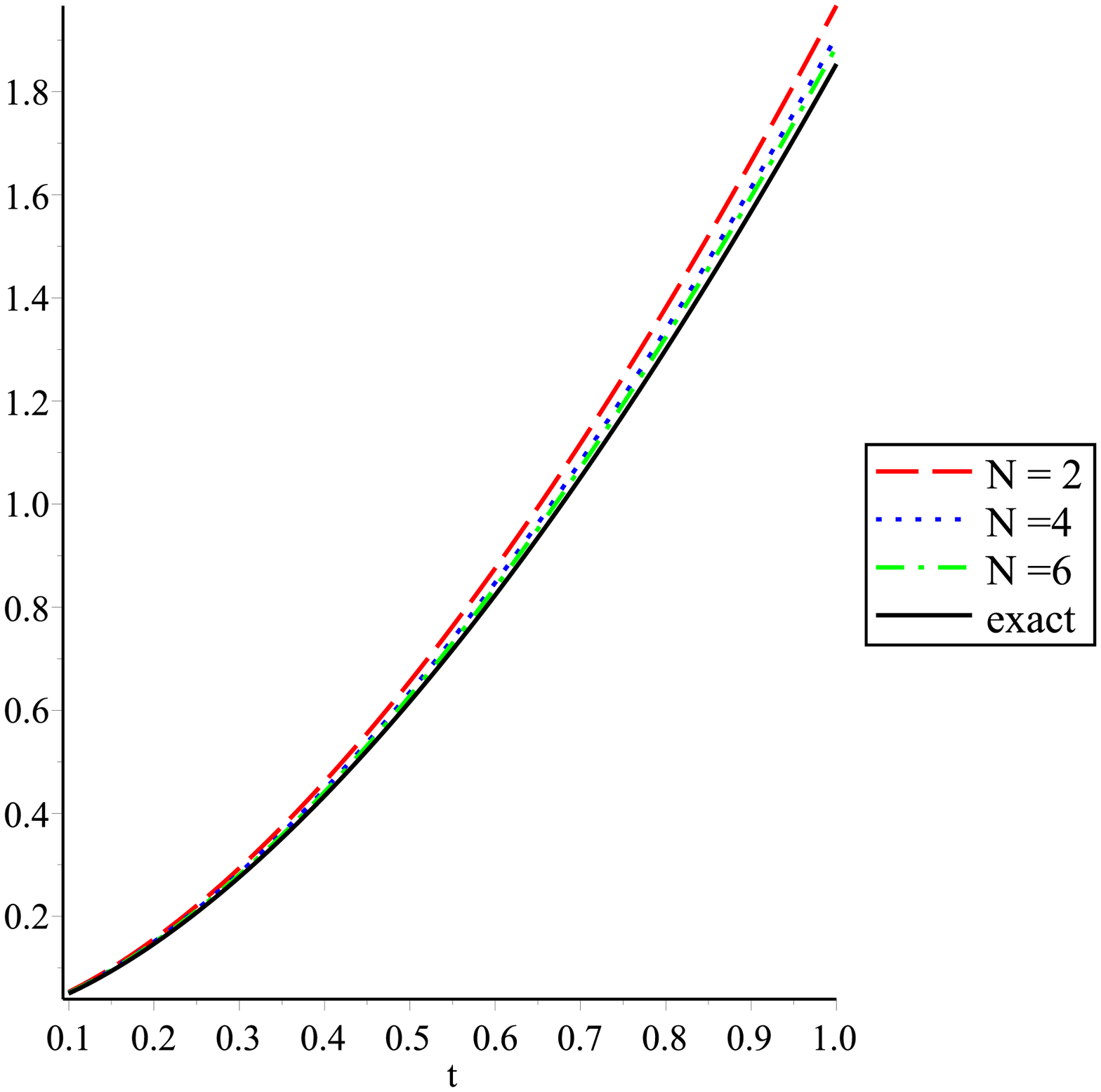}}\hspace{1cm}
\subfigure[Error]{\includegraphics[scale=0.3]{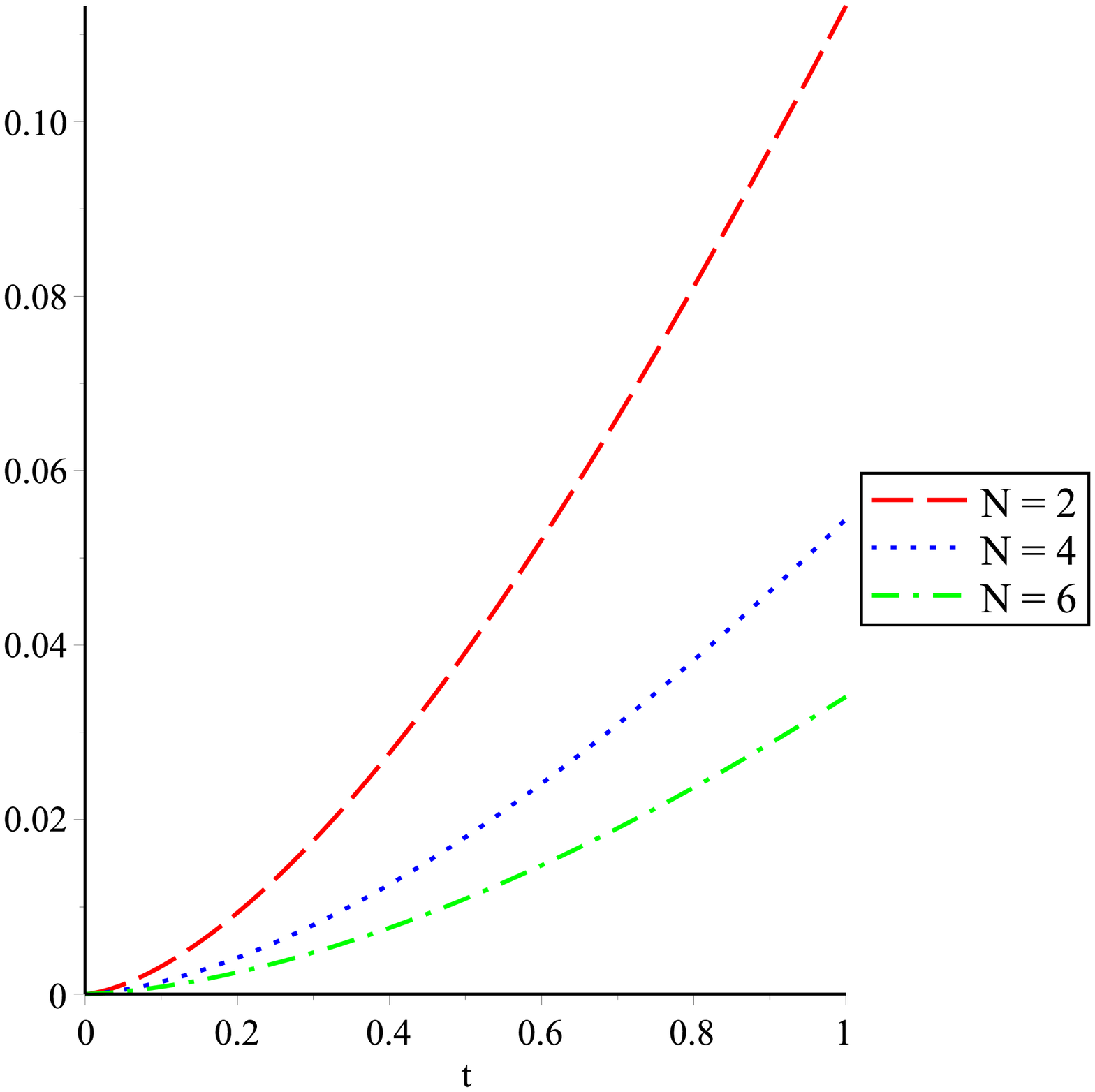}}
\end{center}
\caption{Type I left Caputo derivative of order $\beta(t)$
for the example of Section~\ref{sec:EX}---analytic
versus numerical approximations obtained from Theorem~\ref{teo2}.}
\label{IntExp6}
\end{figure}
\begin{figure}[!ht]
\begin{center}
\subfigure[${^C_0\mathcal{D}_t^{\beta(t)}} x(t)$]{\includegraphics[scale=0.3]{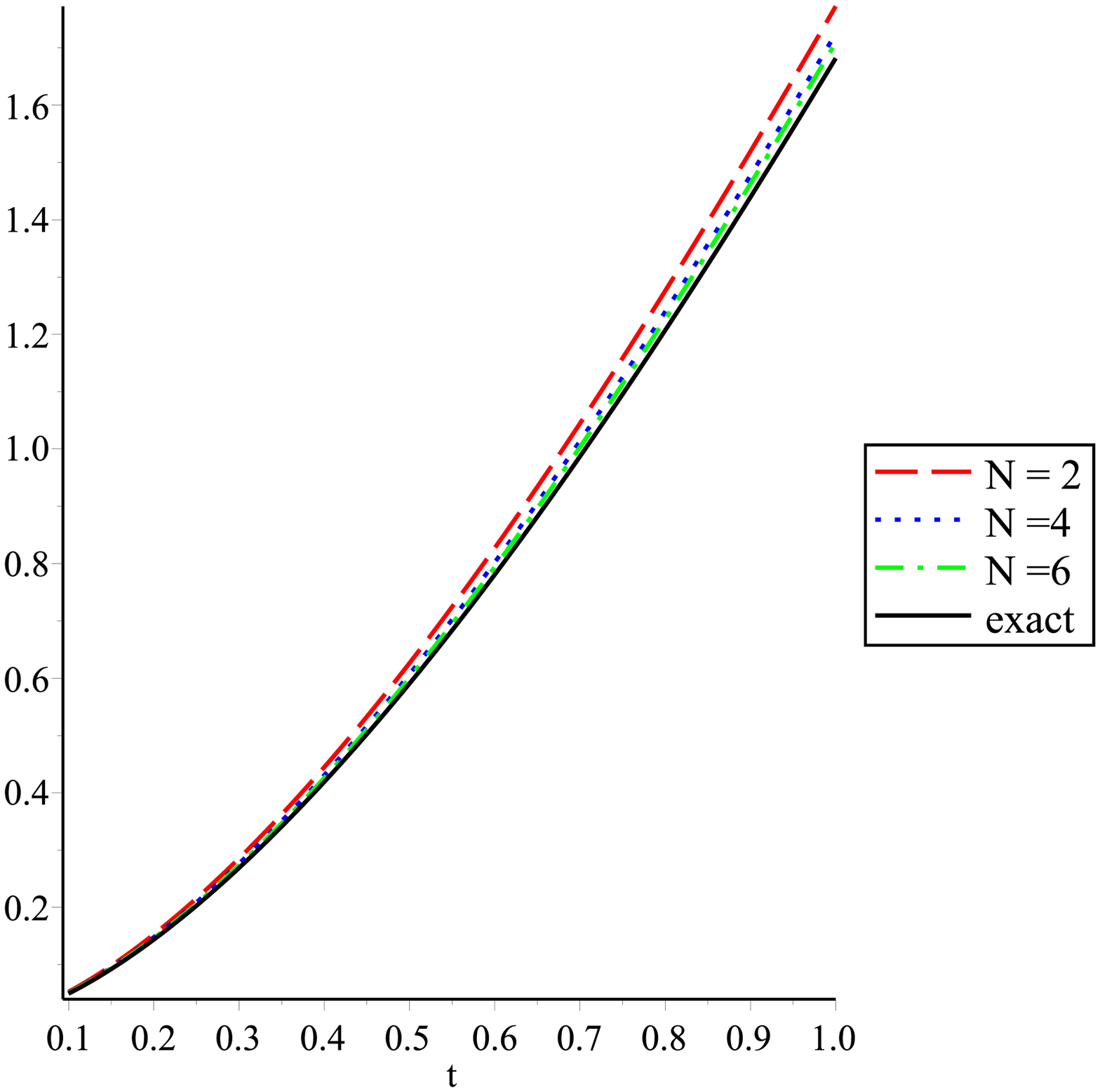}} \hspace{1cm}
\subfigure[Error]{\includegraphics[scale=0.3]{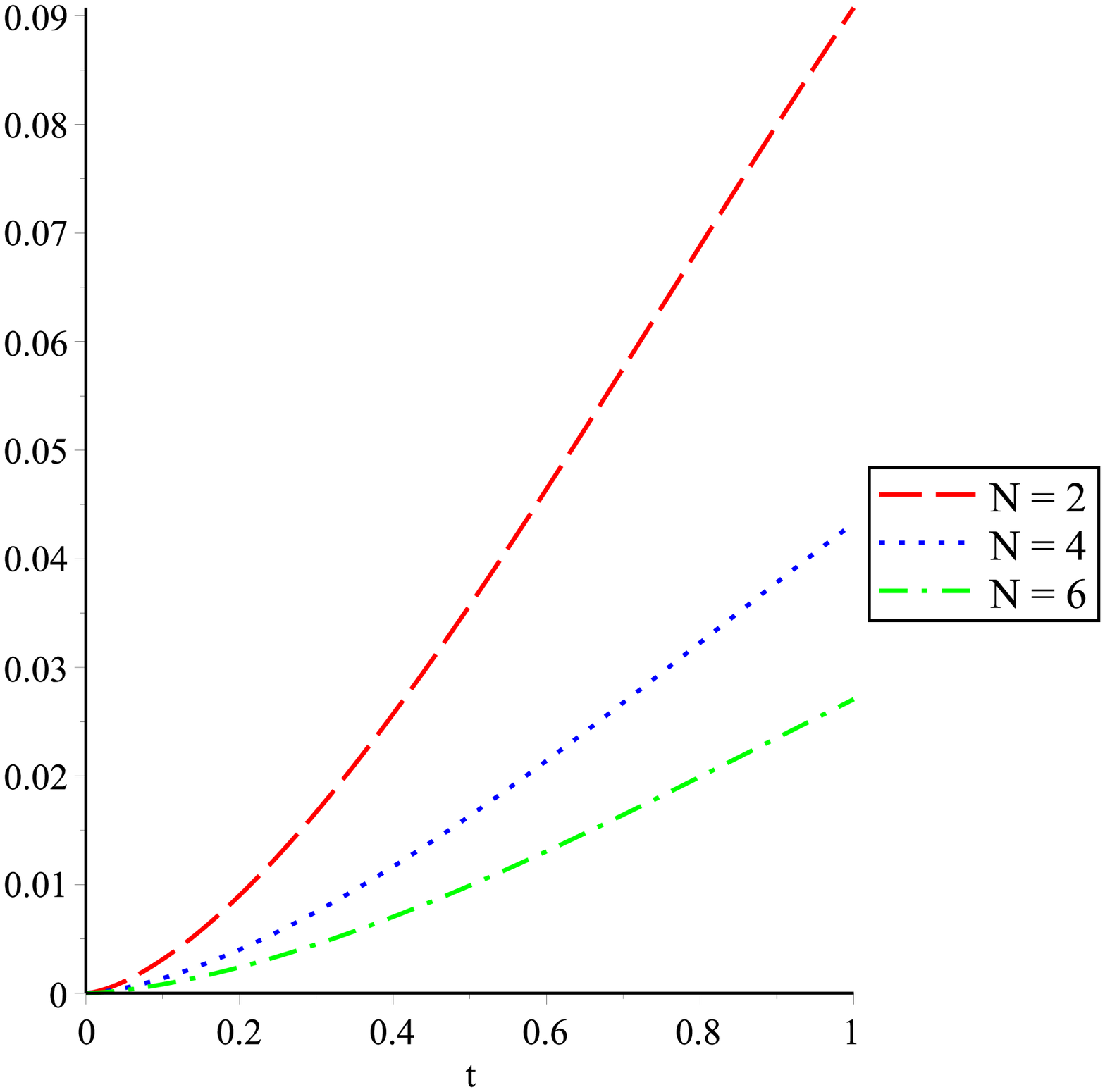}}
\end{center}
\caption{Type II left Caputo derivative of order $\beta(t)$
for the example of Section~\ref{sec:EX}---analytic
versus numerical approximations obtained from Theorem~\ref{teo3}.}
\label{IntExp7}
\end{figure}


\section{Applications}
\label{sec:app}

In this section we apply the proposed technique to some concrete
fractional differential equations of physical relevance.


\subsection{A time-fractional diffusion equation}
\label{example1}

We extend the one-dimensional time-fractional diffusion equation \cite{Lin}
to the variable order case. Consider
$u=u(x,t)$ with domain $[0,1]^2$. The partial fractional differential
equation of order $\a$ is the following:
\begin{equation}
\label{eq:diffeq}
{^C_{0}\mathbb{D}_{t}^{\a}u}(x,t)-\frac{\partial^2u}{\partial x^2}(x,t)
=f(x,t) \quad \mbox{ for } \, x\in[0,1],
\quad t\in [0,1],
\end{equation}
subject to the boundary conditions
\begin{equation}
\label{eq:diffeq:bc1}
u(x,0)=g(x), \quad \mbox{ for } \, x\in(0,1),
\end{equation}
and
\begin{equation}
\label{eq:diffeq:bc2}
u(0,t)=u(1,t)=0,\quad \mbox{ for } \, t\in[0,1].
\end{equation}
We mention that when $\a\equiv1$, one obtains the classical diffusion equation,
and when $\a \equiv 0$ one gets the classical Helmholtz elliptic equation.
Using Lemma~\ref{LemmaEx}, it is easy to check that
$$
u(x,t)=t^2\sin(2\pi x)
$$
is a solution to \eqref{eq:diffeq}--\eqref{eq:diffeq:bc2} with
$$
f(x,t)=\left(\frac{2}{\Gamma(3-\a)}t^{2-\a}
+4\pi^2t^2\right)\sin(2\pi x)
$$
and
$$
g(x)=0
$$
(compare with Example~1 in \cite{Lin}).
The numerical procedure is the following: replace ${^C_{0}\mathbb{D}_{t}^{\a}u}$
with the approximation given in Theorem~\ref{teo1},
taking $n=1$ and an arbitrary $N\geq 1$, that is,
$$
{^C_{0}\mathbb{D}_{t}^{\a}u}(x,t)\approx A t^{1-\a}
\frac{\partial u}{\partial t}(x,t) +\sum_{p=1}^N B_p t^{1-p-\a} V_p(x,t)
$$
with
\begin{equation*}
\begin{split}
A& =\DS \frac{1}{\Gamma(2-\a)}\left[
1+\sum_{l=1}^N \frac{\Gamma(\a-1+l)}{\Gamma(\a-1)l!}  \right],\\
B_p & =  \DS\frac{\Gamma(\a-1+p)}{\Gamma(1-\a)\Gamma(\a)(p-1)!},\\
V_p(x,t)& = \DS\int_{0}^{t}\t^{p-1}\frac{\partial u}{\partial t}(x,\t)d\t.
\end{split}
\end{equation*}
Then, the initial fractional problem \eqref{eq:diffeq}--\eqref{eq:diffeq:bc2}
is approximated by the following system of second-order partial differential equations:
$$
A t^{1-\a}\frac{\partial u}{\partial t}(x,t)
+\sum_{p=1}^N B_p t^{1-p-\a} V_p(x,t)
-\frac{\partial^2u}{\partial x^2}(x,t)=f(x,t)
$$
and
$$
\frac{\partial V_p}{\partial t}(x,t)=t^{p-1}
\frac{\partial u}{\partial t}(x,t),
\quad p=1,\ldots, N,
$$
for $x\in[0,1]$ and for $t\in [0,1]$,
subject to the boundary conditions
$$
u(x,0)=0, \quad \mbox{ for } \, x\in(0,1),
$$
$$
u(0,t)=u(1,t)=0,\quad \mbox{ for } \, t\in[0,1],
$$
and
$$
V_p(x,0)=0, \quad \mbox{ for } \, x\in[0,1], \quad p=1,\ldots, N.
$$

\begin{Remark}
As was mentioned in Theorem~\ref{teo1}, as $N$ increases,
the error of the approximation decreases and the given approximation
formula converges to the fractional derivative. Thus, in order to have
a good accuracy for the method, one should take higher values for $N$.
\end{Remark}

\begin{Remark}
We are not aware of similar methods to our, concerning variable fractional calculus,
in order to compare the performance of the proposed method to other numerical
approximation methods. For this reason, we decided to compare with the exact solution.
In the available literature, using a discretization process, FDEs are solved
as finite differences. Our technique is quite different: we rewrite the FDE as a system
of ordinary differential equations, and then we can apply any known technique to solve it.
Note that the reason why we stopped here with $N=6$ was to have an approximation
that is enough close to the exact solution but still visually distinguishably
(when we increase $N$ more, the approximation and the exact solution
appear to be the same in the plots). In terms of performance of the method,
it is roughly the same to put $N = 6$ or bigger.
\end{Remark}


\subsection{A fractional partial differential equation in fluid mechanics}
\label{sec:fluid:mech}

We now apply our approximation techniques to the following
one-dimensional linear inhomogeneous fractional Burgers'
equation of variable order (see \cite[Example~5.2]{Odibat}):
\begin{equation}
\label{eq:fluid:mech}
{^C_{0}\mathbb{D}_{t}^{\a}u}(x,t)+\frac{\partial u}{\partial x}(x,t)
-\frac{\partial^2u}{\partial x^2}(x,t)=\frac{2t^{2-\a}}{\Gamma(3-\a)}
+2x-2, \quad \mbox{ for } \, x\in[0,1], \quad t\in [0,1],
\end{equation}
subject to the boundary condition
\begin{equation}
\label{eq:fluid:mech:bc}
u(x,0)=x^2, \quad \mbox{ for } \, x\in(0,1).
\end{equation}
Here,
$$
F(x,t)=\frac{2t^{2-\a}}{\Gamma(3-\a)}+2x-2
$$
is the external force field. Burgers' equation is used to model gas dynamics,
traffic flow, turbulence, fluid mechanics, etc. The exact solution is
$$
u(x,t)=x^2+t^2.
$$
The fractional problem \eqref{eq:fluid:mech}--\eqref{eq:fluid:mech:bc}
can be approximated by
$$
A t^{1-\a}\frac{\partial u}{\partial t}(x,t) +\sum_{p=1}^N B_p t^{1-p-\a} V_p(x,t)
+\frac{\partial u}{\partial x}(x,t)-\frac{\partial^2u}{\partial x^2}(x,t)
=\frac{2t^{2-\a}}{\Gamma(3-\a)}+2x-2
$$
with $A$, $B_p$ and $V_p$, $p\in\{1,\ldots,N\}$, as in Section~\ref{example1}.
The approximation error can be decreased as much as desired by increasing
the value of $N$.


\small


\section*{Acknowledgments}

This work was supported by Portuguese funds through the
\emph{Center for Research and Development in Mathematics and Applications} (CIDMA)
and \emph{The Portuguese Foundation for Science and Technology} (FCT),
within project UID/MAT/04106/2013. Tavares was also supported
by FCT through the Ph.D. fellowship SFRH/BD/42557/2007;
Torres by project PTDC/EEI-AUT/1450/2012, co-financed by FEDER under
POFC-QREN with COMPETE reference FCOMP-01-0124-FEDER-028894.
The authors are very grateful to three anonymous referees,
for several comments and improvement suggestions.



\end{document}